\documentclass[pdflatex,sn-mathphys-num]{sn-jnl}

\usepackage{graphicx}%
\usepackage{multirow}%
\usepackage{amsmath,amssymb,amsfonts,mathtools}%
\usepackage{amsthm}%
\usepackage{mathrsfs}%
\usepackage[title]{appendix}%
\usepackage{xcolor}%
\usepackage{textcomp}%
\usepackage{manyfoot}%
\usepackage{booktabs}%
\usepackage{enumitem}%
\usepackage{url}%
\usepackage{hyperref}%

\numberwithin{equation}{section}
\setlist[enumerate]{label=\textup{(\roman*)},leftmargin=2.2em}
\setlist[itemize]{leftmargin=2.0em}
\allowdisplaybreaks

\theoremstyle{thmstyleone}%
\newtheorem{theorem}{Theorem}[section]
\newtheorem{corollary}[theorem]{Corollary}
\newtheorem{proposition}[theorem]{Proposition}
\newtheorem{lemma}[theorem]{Lemma}

\newtheorem{question}[theorem]{Question}

\theoremstyle{thmstyletwo}%
\newtheorem{remark}[theorem]{Remark}
\newtheorem{example}[theorem]{Example}

\theoremstyle{thmstylethree}%
\newtheorem{definition}[theorem]{Definition}

\newcommand{\N}{\mathbb N}
\newcommand{\Nzero}{\mathbb N_0}
\newcommand{\Z}{\mathbb Z}
\newcommand{\Q}{\mathbb Q}
\newcommand{\G}{\mathcal G}
\newcommand{\U}{\mathcal U}
\newcommand{\A}{\mathcal A}
\newcommand{\F}{\mathsf F}
\newcommand{\Zsf}{\mathsf Z}
\newcommand{\Lsf}{\mathsf L}

\newcommand{\eps}{\varepsilon}

\newcommand{\precO}{\preceq_{\Omega}}

\begin{document}

\title[Finite factorization is detected by undermonoids]{Finite factorization is detected by undermonoids}

\author*[1]{\fnm{Yutong} \sur{Zhang}}\email{yutongzhang@stu.scu.edu.cn}
\author[1]{\fnm{Yaoran} \sur{Yang}}\email{yangyaoran@stu.scu.edu.cn}
\affil[1]{\orgdiv{School of Mathematics}, \orgname{Sichuan University}, \orgaddress{\city{Chengdu}, \postcode{610065}, \country{China}}}

\abstract{
Let $M$ be a cancellative commutative monoid and call a submonoid $S$ of $M$ an undermonoid if $\G(S)=\G(M)$ inside the Grothendieck group of $M$.  Gotti and Li asked whether the finite factorization property is hereditary once it is known on all undermonoids: if every undermonoid of $M$ is a finite factorization monoid, must every submonoid of $M$ be a finite factorization monoid?  We give an affirmative answer.  Equivalently, for every cancellative commutative monoid $M$, the following two conditions coincide: every submonoid of $M$ is an FFM, and every undermonoid of $M$ is an FFM.  The proof isolates a fixed length $\ell$ and an infinite set of length-$\ell$ factorizations of one element $b$.  In the non-group case, a divisor-complement ideal $I=\{m\in M:m\nmid_M b\}$ enlarges the bad submonoid to a bad undermonoid while preserving the chosen length-$\ell$ factorizations.  In the group case, a maximality argument over submonoids for which these factorizations survive is combined with a two-sided perturbation $S\mapsto S+\Nzero(2b+u)$.  The key point is that the perturbation creates no new units and does not split any atom occurring in the fixed factorizations.  This yields an undermonoid with infinitely many factorizations of $b$, contradicting the hypothesis.
}

\keywords{cancellative monoid, finite factorization monoid, undermonoid, hereditary factorization, Grothendieck group, Zorn's lemma}

\pacs[Mathematics Subject Classification]{Primary 20M13; Secondary 13A05, 13F15, 20M14}

\maketitle

\section{Introduction}

All monoids in this paper are cancellative and commutative and are written additively.  The identity is denoted by $0$, the group of units by $\U(M)$, and the Grothendieck group by $\G(M)$.  If $N\subseteq M$ is a submonoid, then throughout the paper $\G(N)$ is viewed as the subgroup of $\G(M)$ generated by $N$.  Following Gotti and Li~\cite{GottiLi}, $N$ is called an \emph{undermonoid} of $M$ when
\begin{equation}\label{eq:under-intro}
        \G(N)=\G(M).
\end{equation}
The terminology is intended to measure how much arithmetic of $M$ can be recovered from submonoids that generate the same ambient group.

Let $M_{\mathrm{red}}=M/\U(M)$.  If $\A(M_{\mathrm{red}})$ denotes the set of atoms of the reduced monoid, the factorization monoid of $M$ is
\begin{equation}\label{eq:factorization-monoid-intro}
        \Zsf(M)=\F\big(\A(M_{\mathrm{red}})\big),
\end{equation}
with factorization homomorphism
\begin{equation}\label{eq:factorization-hom-intro}
        \pi_M:\Zsf(M)\longrightarrow M_{\mathrm{red}},
        \qquad
        \pi_M(a)=a\quad(a\in\A(M_{\mathrm{red}})).
\end{equation}
For $b\in M$, put
\begin{equation}\label{eq:factorization-set-intro}
        \Zsf_M(b)=\pi_M^{-1}\big(b+\U(M)\big),
        \qquad
        \Zsf_{M,\ell}(b)=\{z\in\Zsf_M(b): |z|=\ell\}.
\end{equation}
Thus $M$ is a finite factorization monoid, abbreviated FFM, if
\begin{equation}\label{eq:ffm-def-intro}
        1\le |\Zsf_M(b)|<\infty\qquad\text{for every } b\in M.
\end{equation}
The bounded factorization property is the corresponding finiteness condition for lengths,
\begin{equation}\label{eq:bfm-def-intro}
        1\le |\Lsf_M(b)|<\infty,
        \qquad
        \Lsf_M(b)=\{|z|:z\in\Zsf_M(b)\}.
\end{equation}
Consequently
\begin{equation}\label{eq:ffm-decomposition-intro}
       M\text{ is an FFM}
       \iff
       M\text{ is a BFM and } |\Zsf_{M,\ell}(b)|<\infty
       \text{ for all } b,\ell.
\end{equation}

Gotti and Li proved that the bounded factorization property is detected by undermonoids~\cite[Theorem~5.4]{GottiLi}: every undermonoid of $M$ is a BFM if and only if every submonoid of $M$ is a BFM.  They then posed the corresponding finite-factorization problem~\cite[Question~5.5]{GottiLi}.

\begin{question}[Gotti--Li]\label{q:gotti-li}
Let $M$ be a monoid.  If every undermonoid of $M$ is an FFM, does it follow that every submonoid of $M$ is an FFM?
\end{question}

The purpose of this paper is to settle Question~\ref{q:gotti-li}.  We prove the following theorem.

\begin{theorem}[Main theorem]\label{thm:main}
For every cancellative commutative monoid $M$, the following conditions are equivalent.
\begin{enumerate}
\item Every submonoid of $M$ is an FFM.
\item Every undermonoid of $M$ is an FFM.
\end{enumerate}
Equivalently, finite factorization is a hereditary property precisely when it holds on all undermonoids.
\end{theorem}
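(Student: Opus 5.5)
The implication (i)$\Rightarrow$(ii) is trivial, since every undermonoid is a submonoid. For (ii)$\Rightarrow$(i) I argue by contradiction. Assume every undermonoid of $M$ is an FFM and that some submonoid $S$ is not. Every undermonoid is then a BFM, so by \cite[Theorem~5.4]{GottiLi} every submonoid of $M$, and in particular $S$, is a BFM. By \eqref{eq:ffm-decomposition-intro} there are $b\in S$ and $\ell\ge 1$ with $\Zsf_{S,\ell}(b)$ infinite. I fix infinitely many pairwise distinct length-$\ell$ factorizations $z_1,z_2,\dots$ of $b$ in $S$, and let $A$ be the set of atoms of $S$ that occur in them. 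The goal is to build an undermonoid $T\supseteq S$ with three properties: every $a\in A$ is still a non-unit atom of $T$; no two atoms of $A$ that are non-associated in $S$ become associated in $T$; and the $z_i$ therefore remain infinitely many distinct factorizations of $b$ in $T$. This contradicts (ii).

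\emph{Non-group case, $b\notin\U(M)$.} I set $I=\{m\in M: m\nmid_M b\}$ and $T=S\cup I$. Since $I$ is an ideal of $M$, $T$ is a submonoid. For any $m\in M$ we have $m+2b\in I$: otherwise $m+2b+c=b$ for some $c$, which forces $b\in\U(M)$. Also $2b\in I$. Hence $m=(m+2b)-2b\in\G(T)$, so $T$ is an undermonoid. Now take $a\in A$ and suppose $a=x+y$ with $x\in I$. Then $x\mid_M a\mid_M b$, which is a contradiction. So every decomposition of $a$ in $T$ already lies in $S$. Similarly, no element of $I$ can be a unit of $T$, because $t+t'=0$ gives $t\mid_M 0\mid_M b$. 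Hence $\U(T)=\U(S)$, and the atoms of $A$ and their associate classes are unchanged. So the $z_i$ survive in $T$.

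\emph{Case $b\in\U(M)$.} Here all of $S$'s relevant atoms lie in the group $\U(M)$. I first run the group-case argument below inside $\U(M)$, which produces a submonoid $T_0\subseteq\U(M)$ with $\G(T_0)=\U(M)$ in which the $z_i$ survive. I then take $T=T_0\cup(M\setminus\U(M))$. Since $M\setminus\U(M)$ is an ideal, $T$ is a submonoid. A sum that involves a non-unit of $M$ is a non-unit of $M$, so atoms of $T_0$ inside $\U(M)$ are not split, and $\U(T)=\U(T_0)$. Finally $\G(T)=\G(M)$ whenever $M\setminus\U(M)\neq\emptyset$, and if $M$ is itself a group we are directly in the group case.

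\emph{Group case.} Let $M$ be a group. I apply Zorn's lemma to the family $\mathcal T$ of submonoids $T\supseteq S$ in which every $a\in A$ is a non-unit atom of $T$ and atoms of $A$ that are non-associated in $S$ remain non-associated in $T$. Each of these conditions is witnessed by finitely many elements, so $\mathcal T$ is closed under unions of chains. Let $T$ be a maximal member. If $\G(T)=M$ we are done. Otherwise I pick $u\in M\setminus\G(T)$ and consider the two perturbations
\[
T_\pm \;=\; T+\Nzero(2b\pm u).
\]
I aim to show that at least one of $T_+,T_-$ lies in $\mathcal T$, which contradicts maximality.

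The computation goes through the relation $t+n(2b\pm u)=0$ or $a=t+n(2b\pm u)$ with $n\ge1$. Because $u\notin\G(T)$, the coefficient of $u$ must be balanced. Comparing the relations obtained for $T_+$ and for $T_-$, or using that $nu\in\G(T)$ with $u\notin\G(T)$ (which requires handling torsion of $M/\G(T)$ carefully), should produce an equation of the form $a+t'=2nb+(\dots)$ in $T$. Since $b$ is a sum of $\ell$ atoms, $2nb$ is a large multiple of $b$, and this equation would exhibit either $a$ as a unit of $T$ or an unbounded length phenomenon contradicting the BFM property of $T$; recall that $T$ is a submonoid, hence a BFM by \cite[Theorem~5.4]{GottiLi}.

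This two-sided perturbation step is the main obstacle. The delicate points are that the perturbation creates no new units and splits no atom of $A$, and the case where $u$ has finite order modulo $\G(T)$. I would handle the torsion case first by choosing $u$ of prime order modulo $\G(T)$ and passing to the relations for $T_+$ and $T_-$ simultaneously.
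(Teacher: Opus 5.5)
\textbf{There is a genuine gap: the group-case step is not proved, and the mechanism you sketch for it would not work.}

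Your overall plan is the paper's. You reduce via \cite[Theorem~5.4]{GottiLi} to an infinite fixed-length slice, enlarge by the divisor-complement ideal when $M$ is not a group, and use Zorn's lemma plus the perturbations $T+\Nzero(2b\pm u)$ when $M$ is a group. Your non-group argument is sound. Your Zorn family is also fine: it is ordered by plain inclusion and asks only that atoms in $A$ stay atoms and that non-associated ones stay non-associated. It is closed under chain unions and suffices for persistence, so it is a harmless simplification of the paper's $\precO$ and unit-reflection bookkeeping. The separate case $b\in\U(M)$ is unnecessary. There $I=M\setminus\U(M)$, and writing $m=(c+m)-c$ for any fixed $c\in I$, instead of using $2b$, gives $\G(S\cup I)=\G(M)$ directly, as in the paper.

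The gap is the claim that $T_+$ or $T_-$ lies in $\mathcal T$. This is the heart of the proof, and you leave it as a heuristic. ``Balancing the coefficient of $u$'' fails as soon as $M/\G(T)$ has torsion. An equation of the shape $a+t'=2nb+\cdots$ does not make $a$ a unit. The appeal to the BFM property of $T$ is not an argument, and BFM is not what is needed. Passing to $u$ of prime order modulo $\G(T)$ does not help either.

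The missing idea is a concrete computation showing that every failure produces a relation in which $b$ occurs with positive coefficient. Put $w_\pm=2b\pm u$.

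\emph{New units in $T_+$.} A relation $(s+kw_+)+(t+mw_+)=0$ with $K=k+m\ge1$ gives $Ku+q=0$ with $q=s+t+2Kb\in T$.

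\emph{No new units in $T_+$.} Then no two atoms of $A$ can become associated, and no atom of $A$ becomes a unit. A genuine splitting $a=s_1+s_2+Kw_++\eta$ of some $a\in A$ must have $K\ge1$, since otherwise it is a splitting inside $T$. Substituting it into $b=a+r+\eps$ gives $Ku+q=0$ with $q=s_1+s_2+r+\eps+\eta+(2K-1)b\in T$. This is where the factor $2$ in $2b$ matters: it guarantees $2K-1\ge1$.

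\emph{Failures of $T_-$.} In the same way, every failure of $T_-$ gives $-K'u+q'=0$ with $K'\ge1$ and $q'\in T$.

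\emph{Conclusion.} Split on whether $n_0u\in T$ for some $n_0\in\N$.
\begin{itemize}
\item If so, multiplying the $T_+$ relation by $n_0$ gives $b+q''=0$ with $q''\in T$.
\item If not, the $T_-$ relation says $K'u=q'\in T$, which is impossible.
\end{itemize}
Your idea of comparing the two perturbations also works if carried out: if both fail, $K'q+Kq'=0$ already forces $b\in\U(T)$. But $b$ is divisible in $T$ by a non-unit atom from $A$, so $b\notin\U(T)$. Hence one of $T_\pm$ belongs to $\mathcal T$, and it strictly contains $T$ because $u\notin\G(T)$. This contradicts maximality.

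This is exactly the paper's dichotomy \eqref{eq:condition-i}/\eqref{eq:condition-ii} in Lemmas~\ref{lem:no-new-units}, \ref{lem:atoms-do-not-split} and \ref{lem:maximal-is-under}. It uses neither the BFM property of $T$ nor any torsion analysis of $M/\G(T)$. The case distinction concerns positive multiples of $u$ lying in the monoid $T$, not in $\G(T)$.
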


The implication (i)$\Rightarrow$(ii) is immediate from the definitions.  The proof of the converse has two substantially different parts.  First, because FFMs are BFMs, the Gotti--Li theorem for bounded factorization implies that every submonoid of $M$ is a BFM.  Thus a counterexample to hereditary finite factorization has a single element $b$ and a single length $\ell$ with
\begin{equation}\label{eq:bad-fixed-length-intro}
      |\Zsf_{N,\ell}(b)|=\infty\qquad(\ast)
\end{equation}
inside some submonoid $N\subseteq M$.  The task is to pass from $(N,b,\ell)$ to an undermonoid $W$ in which the same fixed-length infinitude remains visible.

When $M$ is not a group, the construction is direct.  Let
\begin{equation}\label{eq:ideal-intro}
      I_b(M)=\{m\in M:m\nmid_M b\}.
\end{equation}
The set $I_b(M)$ is a nonempty ideal of $M$, and
\begin{equation}\label{eq:non-group-enlargement-intro}
      W=N\cup I_b(M)
\end{equation}
is an undermonoid of $M$.  Since the chosen atoms occurring in $(\ast)$ all divide $b$, none of them becomes decomposable after adjoining $I_b(M)$: any decomposition of such an atom would again consist of divisors of $b$, hence would lie back in $N$.  The infinite family of length-$\ell$ factorizations survives in $W$, contradiction.

When $M$ is a group, the construction is subtler.  We consider submonoids $S$ containing $N$ for which the chosen infinite family of factorizations survives, and then choose a maximal such $S$ under an order that preserves the existing units of smaller submonoids.
If $u\in M$ satisfies either
\begin{equation}\label{eq:group-conditions-intro}
       n_0u\in S\quad\text{for some }n_0\in\N,
       \qquad\text{or}\qquad
       nu+q\ne 0\quad(n\in\N,\ q\in S).\qquad(\dagger)
\end{equation}
then the perturbation
\begin{equation}\label{eq:perturb-intro}
      S'=S+\Nzero(2b+u)
\end{equation}
creates no new units and preserves the chosen atoms.  Maximality therefore forces $2b+u\in S$.  Taking $u=v$ and $u=-v$ for $v\notin\G(S)$ then yields a contradiction, so $\G(S)=M$ and $S$ is the desired bad undermonoid.

The argument is designed to be invariant under units.  This is essential: in the non-reduced case, the fact that two atoms of $N$ remain distinct in a larger monoid is not guaranteed merely by $\U(S)\cap N=\U(N)$.  We therefore work with the stronger unit-reflection condition
\begin{equation}\label{eq:unit-reflection-intro}
       \U(S)\cap\G(N)=\U(N),
\end{equation}
which ensures that the canonical map
\begin{equation}\label{eq:injective-reduction-intro}
       N/\U(N)\longrightarrow S/\U(S)
\end{equation}
is injective.  The maximality argument is carried out inside the class of extensions satisfying \eqref{eq:unit-reflection-intro} and preserving precisely those atom classes that occur in the fixed infinite family \eqref{eq:bad-fixed-length-intro}.

\section{Preliminaries}

We use standard notation from factorization theory in commutative cancellative monoids; see, for example, Geroldinger and Halter-Koch~\cite{GHK}.  This section records the exact conventions needed for the proof, including the unit-sensitive form of factorization persistence used later.  Throughout, $\N=\{1,2,\ldots\}$ and $\Nzero=\N\cup\{0\}$.

\subsection{Divisibility, reductions, and factorizations}

Let $M$ be a monoid.  Since $M$ is cancellative, the canonical map $M\to \G(M)$ is injective; we identify $M$ with its image.  For $a,b\in M$ we write
\begin{equation}\label{eq:divisibility}
        a\mid_M b
        \quad\Longleftrightarrow\quad
        b-a\in M
        \quad\Longleftrightarrow\quad
        b=a+c\text{ for some }c\in M.
\end{equation}
If $a\mid_M b$, then every representative of the class $a+\U(M)$ divides $b$ up to a unit: indeed,
\begin{equation}\label{eq:unit-divisor}
        b=a+c=(a+\eps)+(c-\eps)
        \qquad(\eps\in\U(M)).
\end{equation}
The reduced monoid is
\begin{equation}\label{eq:reduction}
        M_{\mathrm{red}}=M/\U(M),
        \qquad
        \bar x=x+\U(M).
\end{equation}
It is reduced, and $\bar x$ is an atom of $M_{\mathrm{red}}$ precisely when $x\notin\U(M)$ and every equality
\begin{equation}\label{eq:atom-class}
        x=y+z+\eps,
        \qquad y,z\in M,
        \quad\eps\in\U(M),
\end{equation}
forces $y\in\U(M)$ or $z\in\U(M)$.  We write
\begin{equation}\label{eq:Z-M}
        \Zsf(M)=\F(\A(M_{\mathrm{red}}))
\end{equation}
for the free commutative monoid on the atom classes of $M_{\mathrm{red}}$.  Its elements are written additively,
\begin{equation}\label{eq:z-expression}
        z=\bar a_1+\cdots+\bar a_\ell,
        \qquad \bar a_i\in\A(M_{\mathrm{red}}),
        \qquad |z|=\ell.
\end{equation}
The factorization homomorphism is
\begin{equation}\label{eq:phi-M}
        \varphi_M:\Zsf(M)\longrightarrow M_{\mathrm{red}},
        \qquad
        \varphi_M(\bar a)=\bar a.
\end{equation}
For $b\in M$,
\begin{align}
        \Zsf_M(b)
        &=\{z\in\Zsf(M):\varphi_M(z)=\bar b\},\label{eq:Z-b}\\
        \Zsf_{M,\ell}(b)
        &=\{z\in\Zsf_M(b):|z|=\ell\},\label{eq:Z-b-ell}\\
        \Lsf_M(b)
        &=\{|z|:z\in\Zsf_M(b)\}.\label{eq:L-b}
\end{align}
If $z=\bar a_1+\cdots+\bar a_\ell\in\Zsf_M(b)$ and $a_i\in M$ are chosen representatives, then there exists $\eps\in\U(M)$ such that
\begin{equation}\label{eq:lifting-factorization}
        b=a_1+\cdots+a_\ell+\eps.
\end{equation}
Conversely, if $\bar a_i\in\A(M_{\mathrm{red}})$ and \eqref{eq:lifting-factorization} holds for some unit $\eps$, then $\bar a_1+\cdots+\bar a_\ell\in\Zsf_M(b)$.

\begin{definition}\label{def:BFM-FFM-LFFM}
A monoid $M$ is
\begin{enumerate}
\item atomic if $\Zsf_M(b)\ne\varnothing$ for every $b\in M$;
\item a BFM if $1\le |\Lsf_M(b)|<\infty$ for every $b\in M$;
\item an LFFM if $M$ is atomic and $|\Zsf_{M,\ell}(b)|<\infty$ for all $b\in M$ and $\ell\in\N$;
\item an FFM if $1\le |\Zsf_M(b)|<\infty$ for every $b\in M$.
\end{enumerate}
\end{definition}

The following elementary criterion is used only after hereditary bounded factorization has already been obtained.

\begin{lemma}\label{lem:bfm-lffm-ffm}
For a monoid $M$,
\begin{equation}\label{eq:ffm-iff-bfm-lffm}
       M\text{ is an FFM}
       \quad\Longleftrightarrow\quad
       M\text{ is a BFM and an LFFM}.
\end{equation}
Equivalently, if $M$ is a BFM and is not an FFM, then there exist $b\in M$, $\ell\in\N$, and an infinite set
\begin{equation}\label{eq:infinite-slice}
       \Omega\subseteq\Zsf_{M,\ell}(b),
       \qquad |\Omega|=\infty.
\end{equation}
Moreover, $b\notin\U(M)$.
\end{lemma}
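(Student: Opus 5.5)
The equivalence is proved one implication at a time, straight from Definition~\ref{def:BFM-FFM-LFFM}, and the two supplementary claims then follow from it.

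For ($\Rightarrow$), suppose $M$ is an FFM. Then every $b\in M$ satisfies $1\le|\Zsf_M(b)|<\infty$, so $M$ is atomic. Since $\Lsf_M(b)$ is the image of the finite nonempty set $\Zsf_M(b)$ under $z\mapsto|z|$, it is also finite and nonempty, so $M$ is a BFM. Finally, each slice $\Zsf_{M,\ell}(b)$ is a subset of the finite set $\Zsf_M(b)$, so $M$ is an LFFM.

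For ($\Leftarrow$), suppose $M$ is a BFM and an LFFM. Fix $b\in M$. Atomicity gives $\Zsf_M(b)\neq\varnothing$. Partitioning by length gives
\begin{equation*}
\Zsf_M(b)=\bigcup_{\ell\in\Lsf_M(b)}\Zsf_{M,\ell}(b).
\end{equation*}
This is a finite union, because $\Lsf_M(b)$ is finite by the BFM hypothesis. Each piece is finite by the LFFM hypothesis, so $\Zsf_M(b)$ is finite and $M$ is an FFM.

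For the ``equivalently'' clause, let $M$ be a BFM that is not an FFM. A BFM is atomic, so by the equivalence just proved $M$ cannot be an LFFM. Hence there are $b\in M$ and $\ell\in\N$ with $|\Zsf_{M,\ell}(b)|=\infty$, and we take $\Omega=\Zsf_{M,\ell}(b)$. Here $\ell\ge 1$: the only length-zero element of $\Zsf(M)$ is the empty factorization, so $\Zsf_{M,0}(b)$ has at most one element.

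The only point needing care is $b\notin\U(M)$. Suppose $b$ were a unit. Then $\bar b=0$ in $M_{\mathrm{red}}$, and any $z=\bar a_1+\cdots+\bar a_\ell\in\Zsf_M(b)$ with $\ell\ge1$ would lift by \eqref{eq:lifting-factorization} to $b=a_1+\cdots+a_\ell+\eps$. Adding $-b$ gives $0=a_1+(a_2+\cdots+a_\ell+\eps-b)$, and the second summand lies in $M$ because $\eps-b\in\U(M)$. So $a_1$ would be a unit, contradicting $\bar a_1\in\A(M_{\mathrm{red}})$. Thus $\Zsf_M(b)$ contains only the empty factorization, which is incompatible with $\Omega$ being infinite with $\ell\ge1$.

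I expect no real obstacle: the argument is bookkeeping with the definitions, and the unit case is the only step that uses the lifting identity \eqref{eq:lifting-factorization}.
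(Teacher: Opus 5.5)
Your proof is correct and follows the paper's argument: the forward direction is immediate from finiteness of $\Zsf_M(b)$, the converse partitions $\Zsf_M(b)$ by length into finitely many finite slices, and the unit case reduces to $\Zsf_M(b)$ containing only the empty factorization. The only differences are cosmetic. You obtain the infinite slice by contraposing the equivalence (using that a BFM is atomic) rather than by pigeonholing an infinite $\Zsf_M(b)$ over the finite set $\Lsf_M(b)$. You also justify, via the lifting identity, why a unit admits no nonempty factorization, which the paper simply asserts.
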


\begin{proof}
If $M$ is an FFM, then every $\Zsf_M(b)$ is nonempty and finite, so both the set of lengths and every fixed-length fiber are finite.  Conversely, if $M$ is a BFM and an LFFM, then
\begin{equation}\label{eq:finite-union}
       \Zsf_M(b)=\bigsqcup_{\ell\in\Lsf_M(b)}\Zsf_{M,\ell}(b)
\end{equation}
for every $b\in M$.  The right side is a finite disjoint union of finite sets; hence $M$ is an FFM.  The final assertion follows by taking $b$ with $\Zsf_M(b)$ infinite.  Since $\Lsf_M(b)$ is finite, some fiber $\Zsf_{M,\ell}(b)$ is infinite.  If $b\in\U(M)$, then $\Zsf_M(b)=\{\emptyset\}$, contradiction.
\end{proof}

\subsection{Unit-reflecting extensions}

Let $N\subseteq S$ be submonoids of a common monoid $M$.  The inclusion $N\hookrightarrow S$ induces a homomorphism on reductions
\begin{equation}\label{eq:reduction-map}
       \rho_{N,S}:N/\U(N)\longrightarrow S/\U(S),
       \qquad
       n+\U(N)\longmapsto n+\U(S).
\end{equation}
The map $\rho_{N,S}$ need not be injective.  Injectivity is precisely the condition that $S$ introduces no new unit relation among elements of $\G(N)$.

\begin{lemma}\label{lem:unit-reflecting}
Let $N\subseteq S\subseteq M$ be submonoids.  The following conditions are equivalent.
\begin{enumerate}
\item The map $\rho_{N,S}$ in \eqref{eq:reduction-map} is injective.
\item $\U(S)\cap\G(N)=\U(N)$.
\item Whenever $x,y\in N$ and $x-y\in\U(S)$ in $\G(M)$, one has $x-y\in\U(N)$.
\end{enumerate}
\end{lemma}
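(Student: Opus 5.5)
I will prove the cycle of implications (i)$\Rightarrow$(iii)$\Rightarrow$(ii)$\Rightarrow$(i). Two facts are used throughout. First, since $N\subseteq S\subseteq M$ and every monoid here is cancellative, all elements live in the common group $\G(M)$, and $\G(N)\subseteq\G(S)\subseteq\G(M)$. Second, $\U(N)=N\cap(-N)$ is a subgroup of $\G(N)$, and $\U(N)\subseteq\U(S)$. So the inclusion $\U(N)\subseteq\U(S)\cap\G(N)$ in (ii) is automatic, and only the reverse inclusion needs proof.

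\emph{(i)$\Rightarrow$(iii).} Let $x,y\in N$ with $x-y\in\U(S)$. Then $x=y+(x-y)$, so $x+\U(S)=y+\U(S)$, that is, $\rho_{N,S}(\bar x)=\rho_{N,S}(\bar y)$. Injectivity of $\rho_{N,S}$ gives $x+\U(N)=y+\U(N)$, so $x-y\in\U(N)$.

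\emph{(iii)$\Rightarrow$(ii).} Take $g\in\U(S)\cap\G(N)$. Since $\G(N)=N-N$, we can write $g=x-y$ with $x,y\in N$. Then $x-y\in\U(S)$, and (iii) yields $g\in\U(N)$. Combined with the automatic inclusion, this gives equality.

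\emph{(ii)$\Rightarrow$(i).} Suppose $x,y\in N$ and $x+\U(S)=y+\U(S)$. Then $x-y\in\U(S)$, and also $x-y\in\G(N)$. By (ii), $x-y\in\U(N)$, so $\bar x=\bar y$ in $N/\U(N)$.

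The map $\rho_{N,S}$ is well defined because $\U(N)\subseteq\U(S)$, and the argument only compares cosets; no separate homomorphism check is needed. The only step with any content is the representation $\G(N)=N-N$ in the step (iii)$\Rightarrow$(ii). This uses the convention fixed in the introduction that $\G(N)$ is the subgroup of $\G(M)$ generated by $N$: every element of that subgroup is a difference of two elements of $N$, because $N$ is closed under addition. I do not expect any real obstacle; the whole proof is a direct unwinding of the definitions.
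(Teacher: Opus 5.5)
Your proof is correct and follows essentially the same route as the paper. The paper observes that $\rho_{N,S}(x+\U(N))=\rho_{N,S}(y+\U(N))$ exactly when $x-y\in\U(S)\cap\G(N)$, and it calls (ii)$\Leftrightarrow$(iii) plain; your cycle of implications spells this out, and your remark that $\G(N)=N-N$ makes explicit the one fact the paper uses silently.
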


\begin{proof}
The equality $\rho_{N,S}(x+\U(N))=\rho_{N,S}(y+\U(N))$ is equivalent to
\begin{equation}\label{eq:rho-equality}
       x-y\in \U(S)\cap\G(N).
\end{equation}
Thus $\rho_{N,S}$ is injective if and only if every element of the intersection in \eqref{eq:rho-equality} already belongs to $\U(N)$.  This is exactly (ii), and (ii) is plainly equivalent to (iii).
\end{proof}

\begin{definition}\label{def:unit-reflecting}
An inclusion $N\subseteq S$ is called \emph{unit-reflecting over $N$} if
\begin{equation}\label{eq:unit-reflecting}
       \U(S)\cap\G(N)=\U(N).
\end{equation}
\end{definition}

Fix a submonoid $N$ and an element $b\in N$.  Suppose
\begin{equation}\label{eq:fixed-Omega}
       \Omega\subseteq\Zsf_{N,\ell}(b)
\end{equation}
for some $\ell\in\N$.  Define $\mathcal C_N(\Omega)$ to be the set of atom classes of $N_{\mathrm{red}}$ occurring in at least one factorization from $\Omega$:
\begin{equation}\label{eq:C-Omega}
       \mathcal C_N(\Omega)
       =\left\{
       \alpha\in\A(N_{\mathrm{red}}):
       z(\alpha)>0\text{ for some }z\in\Omega
       \right\}.
\end{equation}
For each $\alpha\in\mathcal C_N(\Omega)$ choose once and for all a lift
\begin{equation}\label{eq:chosen-lift}
       a_\alpha\in N,
       \qquad
       a_\alpha+\U(N)=\alpha.
\end{equation}
Since $\alpha$ occurs in a factorization of $b$, there exist
\begin{equation}\label{eq:atom-divides-b}
       r_\alpha\in N,
       \qquad
       \eps_\alpha\in\U(N)
\end{equation}
such that
\begin{equation}\label{eq:a-alpha-divides-b}
       b=a_\alpha+r_\alpha+\eps_\alpha.
\end{equation}
Here $r_\alpha$ is the sum of representative atoms for the remaining terms of one chosen factorization containing $\alpha$.

\begin{definition}\label{def:Omega-admissible}
Let $N\subseteq M$, $b\in N$, and $\Omega\subseteq\Zsf_{N,\ell}(b)$.  A submonoid $S$ with $N\subseteq S\subseteq M$ is called \emph{$\Omega$-admissible over $N$} if
\begin{align}
       \U(S)\cap\G(N)&=\U(N),\label{eq:admissible-unit}\\
       a_\alpha+\U(S)&\in\A(S_{\mathrm{red}})
          \quad\text{for every }\alpha\in\mathcal C_N(\Omega).
          \label{eq:admissible-atoms}
\end{align}
\end{definition}

The next lemma records the reason for this definition.

\begin{lemma}[persistence of a fixed family]\label{lem:persistence}
Let $N\subseteq S\subseteq M$ be $\Omega$-admissible over $N$, with $\Omega\subseteq\Zsf_{N,\ell}(b)$.  Then there is an injective map
\begin{equation}\label{eq:persistence-map}
        \Theta_{N,S}:\Omega\hookrightarrow\Zsf_{S,\ell}(b)
\end{equation}
obtained by applying $\rho_{N,S}$ to each atom class in each factorization.  In particular, if $\Omega$ is infinite, then $S$ is not an FFM.
\end{lemma}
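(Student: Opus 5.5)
The plan is to define $\Theta_{N,S}$ atom-by-atom, check that it lands in $\Zsf_{S,\ell}(b)$, and then prove injectivity from the unit-reflection condition \eqref{eq:admissible-unit}. For $\alpha\in\mathcal C_N(\Omega)$ we have $\rho_{N,S}(\alpha)=a_\alpha+\U(S)$, and this is an atom of $S_{\mathrm{red}}$ by \eqref{eq:admissible-atoms}. It is independent of the chosen lift, since any two lifts differ by an element of $\U(N)\subseteq\U(S)$. Every atom class occurring in a factorization $z\in\Omega$ lies in $\mathcal C_N(\Omega)$ by definition. So we can set
\[
\Theta_{N,S}\Bigl(\sum_\alpha z(\alpha)\,\alpha\Bigr)=\sum_\alpha z(\alpha)\,\rho_{N,S}(\alpha)\in\Zsf(S).
\]
Length is preserved, because each atom maps to a single atom.

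Next I will show $\Theta_{N,S}(z)\in\Zsf_S(b)$. Write $z=\alpha_1+\cdots+\alpha_\ell$. By \eqref{eq:lifting-factorization} in $N$, there is $\eps\in\U(N)$ with $b=a_{\alpha_1}+\cdots+a_{\alpha_\ell}+\eps$. Since $\eps\in\U(S)$, the converse direction of \eqref{eq:lifting-factorization} in $S$, together with \eqref{eq:admissible-atoms}, gives $\Theta_{N,S}(z)\in\Zsf_{S,\ell}(b)$.

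For injectivity, I first note that $\rho_{N,S}$ is injective on all of $N_{\mathrm{red}}$ by Lemma~\ref{lem:unit-reflecting}, using \eqref{eq:admissible-unit}. In particular, distinct classes $\alpha\ne\beta$ in $\mathcal C_N(\Omega)$ map to distinct atoms of $S_{\mathrm{red}}$. Hence the map of free monoids $\F(\mathcal C_N(\Omega))\to\F(\A(S_{\mathrm{red}}))$ induced by an injective map on the free generators is injective, since it preserves multiplicities coordinatewise. Restricting to $\Omega$ gives injectivity of $\Theta_{N,S}$.

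For the final assertion, if $\Omega$ is infinite then $\Zsf_{S,\ell}(b)\supseteq\Theta_{N,S}(\Omega)$ is infinite, so $\Zsf_S(b)$ is infinite and $S$ is not an FFM.

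The only delicate point is injectivity. Mere preservation of atoms is not enough, because two non-associated atoms of $N$ could become associated in $S$. The unit-reflection condition is exactly what rules this out: if $a_\alpha-a_\beta\in\U(S)\cap\G(N)=\U(N)$, then $\alpha=\beta$. Everything else is bookkeeping with units.
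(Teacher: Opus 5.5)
Your proposal is correct and follows the paper's proof essentially step for step: you define $\Theta_{N,S}$ through the lifts $a_\alpha$, use $\eps\in\U(N)\subseteq\U(S)$ together with \eqref{eq:admissible-atoms} to land in $\Zsf_{S,\ell}(b)$, and obtain injectivity from $a_\alpha-a_\beta\in\U(S)\cap\G(N)=\U(N)$. Routing that last step through Lemma~\ref{lem:unit-reflecting} and the injectivity of the induced map on free monoids is only a repackaging of the paper's direct multiset comparison.
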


\begin{proof}
Let
\begin{equation}\label{eq:z-in-Omega}
      z=\alpha_1+\cdots+\alpha_\ell\in\Omega,
      \qquad \alpha_i\in\mathcal C_N(\Omega).
\end{equation}
Define
\begin{equation}\label{eq:Theta-def}
      \Theta_{N,S}(z)=
      (a_{\alpha_1}+\U(S))+\cdots+(a_{\alpha_\ell}+\U(S)).
\end{equation}
Condition \eqref{eq:admissible-atoms} ensures that every summand in \eqref{eq:Theta-def} is an atom of $S_{\mathrm{red}}$.  Since $z\in\Zsf_N(b)$, there exists $\eps\in\U(N)\subseteq\U(S)$ such that
\begin{equation}\label{eq:z-lift-b}
      b=a_{\alpha_1}+\cdots+a_{\alpha_\ell}+\eps.
\end{equation}
Hence $\Theta_{N,S}(z)\in\Zsf_{S,\ell}(b)$.

It remains to prove injectivity.  Suppose
\begin{equation}\label{eq:Theta-equal}
      \Theta_{N,S}(z)=\Theta_{N,S}(z')
\end{equation}
for
\begin{equation}\label{eq:z-zprime}
      z=\sum_{\alpha\in\mathcal C_N(\Omega)}m_\alpha\alpha,
      \qquad
      z'=\sum_{\alpha\in\mathcal C_N(\Omega)}m'_\alpha\alpha.
\end{equation}
Equality in the free monoid $\Zsf(S)$ means that the multisets of atom classes
\begin{equation}\label{eq:multisets-S}
      \{a_\alpha+\U(S)\text{ with multiplicity }m_\alpha
      :\alpha\in\mathcal C_N(\Omega)
      \}
\end{equation}
for $z$ and $z'$ coincide.  If
\begin{equation}\label{eq:classes-equal-S}
      a_\alpha+\U(S)=a_\beta+\U(S),
\end{equation}
then $a_\alpha-a_\beta\in\U(S)\cap\G(N)=\U(N)$ by \eqref{eq:admissible-unit}.  Hence
\begin{equation}\label{eq:classes-equal-N}
      a_\alpha+\U(N)=a_\beta+\U(N),
\end{equation}
so $\alpha=\beta$.  Thus the multisets in $S_{\mathrm{red}}$ agree only when the original multisets in $N_{\mathrm{red}}$ agree.  Therefore $z=z'$.
\end{proof}

\section{The non-group enlargement}

This section treats the case in which the ambient monoid is not a group.  The construction is an ideal enlargement: one adjoins every element that fails to divide the chosen element $b$.  The point is that any decomposition of an atom dividing $b$ must again use only divisors of $b$, and hence cannot involve the adjoined ideal.

\begin{lemma}[the divisor-complement ideal]\label{lem:divisor-complement}
Let $M$ be a monoid that is not a group and let $b\in M$.  Put
\begin{equation}\label{eq:I-b}
      I=I_b(M):=\{m\in M:m\nmid_M b\}.
\end{equation}
Then:
\begin{enumerate}
\item $I$ is a nonempty ideal of $M$;
\item $I\cap\U(M)=\varnothing$;
\item if $N\subseteq M$ is any submonoid containing $b$, then
      \begin{equation}\label{eq:M-prime}
          M'=N\cup I
      \end{equation}
      is a submonoid of $M$;
\item $\U(M')=\U(N)$;
\item $M'$ is an undermonoid of $M$.
\end{enumerate}
\end{lemma}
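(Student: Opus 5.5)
We will verify the five assertions in order, using only the definition of divisibility \eqref{eq:divisibility} and cancellativity.

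\emph{For (i)}, if $m\in I$ and $x\in M$ suppose $m+x\mid_M b$. Then $b=m+x+c$ with $c\in M$, so $m\mid_M b$, a contradiction; hence $I+M\subseteq I$. For nonemptiness we use that $M$ is not a group. Pick a nonunit $x\in M$ and consider the elements $b+x+x$, or more simply $b+x$. If $b+x\mid_M b$, then $b=b+x+c$, so $x+c=0$ and $x$ is a unit, which is impossible. Thus $b+x\in I$.

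\emph{For (ii)}, every unit $\eps$ divides $b$, since $b=\eps+(b-\eps)$. Hence $\eps\notin I$.

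\emph{For (iii)}, we have $0\in N\subseteq M'$. Sums inside $N$ stay in $N$. Any sum involving an element of $I$ lies in $I$ by the ideal property, because $N\subseteq M$.

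\emph{For (iv)}, the inclusion $\U(N)\subseteq\U(M')$ is clear. Conversely, let $u\in\U(M')$ with inverse $v\in M'$. Then $u+v=0$, so $u$ and $v$ are units of $M$, and by (ii) neither lies in $I$. Hence $u,v\in N$ and $u\in\U(N)$.

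\emph{For (v)}, we must show that every $m\in M$ lies in $\G(M')$; this is the step needing an idea. Fix a nonunit $x$ as above, so $b+x\in I$. For any $m\in M$, the element $m+b+x$ lies in $I$ because $I$ is an ideal. Therefore
\[
m=(m+b+x)-(b+x)
\]
is a difference of two elements of $I\subseteq M'$, which gives $\G(M')=\G(M)$. Notably, we never need $I$ to meet $N$, and the hypothesis $b\in N$ is used only to make (iii)--(iv) meaningful in context.

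The main obstacle is the nonemptiness of $I$, and with it the undermonoid claim. Both hinge on exhibiting a single element of $I$, namely $b+x$ with $x$ a nonunit, and the argument above handles this by cancellation.
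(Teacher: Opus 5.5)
Your proposal is correct and follows essentially the same route as the paper. Both arguments exhibit $b+x\in I$ for a nonunit $x$ via cancellation, use the ideal property for (iii), use the fact that units divide $b$ for (ii) and (iv), and prove (v) by writing $m=(c+m)-c$ for a fixed $c\in I$; you simply take $c=b+x$ explicitly.
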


\begin{proof}
Since $M$ is not a group, choose $x\in M\setminus\U(M)$.  Then $b+x\in I$: if $b+x\mid_M b$, then
\begin{equation}\label{eq:b-plus-x-divides}
      b=b+x+t
\end{equation}
for some $t\in M$, whence $x+t=0$ by cancellation in $\G(M)$, contradicting $x\notin\U(M)$.  Thus $I\ne\varnothing$.

If $m\in I$ and $q\in M$, then $m+q\in I$.  Indeed, if $m+q\mid_M b$, then
\begin{equation}\label{eq:ideal-proof}
      b=m+q+t=m+(q+t)
\end{equation}
for some $t\in M$, so $m\mid_M b$, contradiction.  Therefore $I+M\subseteq I$, and $I$ is an ideal.  If $u\in\U(M)$, then
\begin{equation}\label{eq:unit-divides-b}
      b=u+(b-u),
      \qquad b-u\in M,
\end{equation}
so $u\mid_M b$ and $u\notin I$.  This proves (ii).

Now let $N\subseteq M$ be a submonoid containing $b$.  Since $I$ is an ideal, sums involving at least one element of $I$ belong to $I$, while sums of two elements of $N$ belong to $N$; hence $M'=N\cup I$ is a submonoid.  Because $I$ contains no unit of $M$, it contains no unit of $M'$.  Let $u\in\U(M')$.  Then $u\in\U(M)$, so $u\notin I$ and hence $u\in N$.  If $v\in M'$ satisfies $u+v=0$, then $v\in\U(M)$, whence $v\notin I$ and $v\in N$.  Thus $u\in\U(N)$.  The reverse inclusion is immediate from $N\subseteq M'$.  Therefore
\begin{equation}\label{eq:units-M-prime}
      \U(M')=\U(N).
\end{equation}

Finally choose $c\in I$.  For every $m\in M$, the ideal property gives
\begin{equation}\label{eq:c-plus-m-in-I}
      c+m\in I\subseteq M'.
\end{equation}
Since also $c\in M'$, we have
\begin{equation}\label{eq:m-in-G-M-prime}
      m=(c+m)-c\in\G(M').
\end{equation}
Hence $\G(M)\subseteq\G(M')$.  The reverse inclusion is automatic because $M'\subseteq M$.  Therefore $\G(M')=\G(M)$.
\end{proof}

\begin{lemma}[preservation of fixed atoms in the non-group case]\label{lem:non-group-preserve-atoms}
Let $M$ be a non-group monoid, let $N\subseteq M$ be a submonoid, let $b\in N$, and let
\begin{equation}\label{eq:Omega-non-group}
      \Omega\subseteq\Zsf_{N,\ell}(b).
\end{equation}
Set $M'=N\cup I_b(M)$.  Then $M'$ is $\Omega$-admissible over $N$.
\end{lemma}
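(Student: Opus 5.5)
We must verify the two conditions of Definition~\ref{def:Omega-admissible} for $S=M'=N\cup I_b(M)$. The unit condition \eqref{eq:admissible-unit} is immediate from Lemma~\ref{lem:divisor-complement}(iv): since $\U(M')=\U(N)\subseteq\G(N)$, we get $\U(M')\cap\G(N)=\U(N)$. The substance lies in \eqref{eq:admissible-atoms}, namely that for each $\alpha\in\mathcal C_N(\Omega)$ the class $a_\alpha+\U(M')$ is an atom of $M'_{\mathrm{red}}$.

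First, $a_\alpha\notin\U(M')$, because $\U(M')=\U(N)$ and $\alpha$ is an atom of $N_{\mathrm{red}}$, so $a_\alpha\notin\U(N)$. Next, suppose
\[
a_\alpha=y+z+\eps,\qquad y,z\in M',\quad \eps\in\U(M')=\U(N),
\]
as in \eqref{eq:atom-class}. We show $y$ and $z$ both lie in $N$ by proving that each divides $b$ in $M$. By \eqref{eq:a-alpha-divides-b},
\[
b=a_\alpha+r_\alpha+\eps_\alpha=y+(z+\eps+r_\alpha+\eps_\alpha).
\]
Here $z\in M'\subseteq M$, $\eps\in M$, $r_\alpha\in N\subseteq M$, and $\eps_\alpha\in M$, so the bracketed element lies in $M$. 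Hence $y\mid_M b$, so $y\notin I_b(M)$, and therefore $y\in N$. The symmetric argument gives $z\in N$.

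The equality $a_\alpha=y+z+\eps$ is now a decomposition inside $N$ with $\eps\in\U(N)$. Since $\alpha$ is an atom of $N_{\mathrm{red}}$, this forces $y\in\U(N)$ or $z\in\U(N)$, and $\U(N)=\U(M')$. Thus $a_\alpha+\U(M')\in\A(M'_{\mathrm{red}})$.

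The only delicate point is the bookkeeping of units. The decomposition in $M'$ may carry a unit $\eps$, and the divisibility of $b$ by $a_\alpha$ carries $\eps_\alpha$. Both are units of $N$, hence elements of $M$, so they can be absorbed into the cofactor. This is exactly where Lemma~\ref{lem:divisor-complement}(iv) is needed twice: to place $\eps$ in $N$, and to transfer unit status of $y$ or $z$ from $N$ to $M'$.
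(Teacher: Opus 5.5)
Your proof is correct and follows essentially the same route as the paper's: you get unit reflection from $\U(M')=\U(N)$, show that both summands of any decomposition $a_\alpha=y+z+\eps$ in $M'$ divide $b$ in $M$ (so they avoid $I_b(M)$ and lie in $N$), and then conclude from the atomicity of $\alpha$ in $N_{\mathrm{red}}$.
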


\begin{proof}
By Lemma~\ref{lem:divisor-complement}, $\U(M')=\U(N)$, so
\begin{equation}\label{eq:unit-reflection-non-group}
      \U(M')\cap\G(N)=\U(N).
\end{equation}
It remains to verify atom preservation.  Fix $\alpha\in\mathcal C_N(\Omega)$ and let $a=a_\alpha$ be the chosen representative.  By \eqref{eq:a-alpha-divides-b},
\begin{equation}\label{eq:a-divides-b-non-group}
      b=a+r+\eps
\end{equation}
for some $r\in N$ and $\eps\in\U(N)=\U(M')$.  Also $a\notin\U(M')$, since otherwise $a\in\U(N)$ by Lemma~\ref{lem:divisor-complement}, contradicting that $a+\U(N)=\alpha$ is an atom.

Assume that $a+\U(M')$ decomposes in $M'_{\mathrm{red}}$.  Then there exist $x,y\in M'$ and $\eta\in\U(M')$ such that
\begin{equation}\label{eq:a-decompose-M-prime}
      a=x+y+\eta.
\end{equation}
Combining \eqref{eq:a-divides-b-non-group} and \eqref{eq:a-decompose-M-prime} gives
\begin{equation}\label{eq:b-x-y}
      b=x+y+r+\eps+\eta.
\end{equation}
Hence $x\mid_M b$ and $y\mid_M b$.  Therefore $x,y\notin I_b(M)$, and since $M'=N\cup I_b(M)$ we get
\begin{equation}\label{eq:x-y-in-N}
      x,y\in N.
\end{equation}
Now \eqref{eq:a-decompose-M-prime} is a decomposition of $a+\U(N)$ in $N_{\mathrm{red}}$.  Since $\alpha=a+\U(N)$ is an atom of $N_{\mathrm{red}}$, either
\begin{equation}\label{eq:atom-forces-unit-non-group}
      x\in\U(N)=\U(M')
      \qquad\text{or}\qquad
      y\in\U(N)=\U(M').
\end{equation}
Thus $a+\U(M')$ is an atom of $M'_{\mathrm{red}}$.
\end{proof}

\begin{proposition}[bad submonoids produce bad undermonoids: non-group case]\label{prop:non-group-bad-under}
Let $M$ be a monoid that is not a group.  Suppose $N\subseteq M$ is a submonoid for which there exist $b\in N$, $\ell\in\N$, and an infinite set
\begin{equation}\label{eq:bad-non-group}
      \Omega\subseteq\Zsf_{N,\ell}(b).
\end{equation}
Then $M$ has an undermonoid that is not an FFM.
\end{proposition}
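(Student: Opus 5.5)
The proof will put the two preceding lemmas together. Set $W=M'=N\cup I_b(M)$, where $I_b(M)$ is the divisor-complement ideal of Lemma~\ref{lem:divisor-complement}. Since $M$ is not a group and $b\in N$, Lemma~\ref{lem:divisor-complement}(iii) shows that $W$ is a submonoid of $M$, and Lemma~\ref{lem:divisor-complement}(v) gives $\G(W)=\G(M)$. Hence $W$ is an undermonoid of $M$.

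The next step is to check that $W$ is $\Omega$-admissible over $N$. This is exactly the content of Lemma~\ref{lem:non-group-preserve-atoms}, applied to $N$, $b$ and the given infinite set $\Omega\subseteq\Zsf_{N,\ell}(b)$. That lemma supplies both $\U(W)\cap\G(N)=\U(N)$ and the condition that each chosen lift $a_\alpha$, for $\alpha\in\mathcal C_N(\Omega)$, stays an atom class in $W_{\mathrm{red}}$.

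Lemma~\ref{lem:persistence} then gives an injection $\Theta_{N,W}:\Omega\hookrightarrow\Zsf_{W,\ell}(b)$. Because $\Omega$ is infinite, $\Zsf_W(b)\supseteq\Zsf_{W,\ell}(b)$ is infinite, so $W$ is not an FFM. This completes the argument.

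I do not expect a real obstacle, since the substantive work is already in the two lemmas. The one point to confirm is that the hypotheses line up. The chosen lifts $a_\alpha$ and the elements $r_\alpha$, $\eps_\alpha$ from \eqref{eq:a-alpha-divides-b} must exist. They do, because every $\alpha\in\mathcal C_N(\Omega)$ occurs in some factorization of $b$ in $N$. Also, Lemma~\ref{lem:non-group-preserve-atoms} needs nothing about $\Omega$ beyond its containment in $\Zsf_{N,\ell}(b)$, so its infinitude is used only at the final step through Lemma~\ref{lem:persistence}.
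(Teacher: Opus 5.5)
Your proposal is correct and matches the paper's proof exactly. You take $W=N\cup I_b(M)$, get the undermonoid property from Lemma~\ref{lem:divisor-complement}, get $\Omega$-admissibility from Lemma~\ref{lem:non-group-preserve-atoms}, and conclude via the injection $\Omega\hookrightarrow\Zsf_{W,\ell}(b)$ from Lemma~\ref{lem:persistence}.
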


\begin{proof}
Set
\begin{equation}\label{eq:W-non-group}
      W=N\cup I_b(M).
\end{equation}
By Lemma~\ref{lem:divisor-complement}, $W$ is an undermonoid of $M$.  By Lemma~\ref{lem:non-group-preserve-atoms}, $W$ is $\Omega$-admissible over $N$.  Lemma~\ref{lem:persistence} gives an injection
\begin{equation}\label{eq:Omega-injects-W}
      \Omega\hookrightarrow \Zsf_{W,\ell}(b).
\end{equation}
Since $\Omega$ is infinite, $\Zsf_W(b)$ is infinite, and $W$ is not an FFM.
\end{proof}

\section{The group case: maximal survival of factorizations}

We now assume that the ambient monoid $M$ is a group.  Then $\G(M)=M$, and a submonoid $S\subseteq M$ is an undermonoid precisely when
\begin{equation}\label{eq:group-undermonoid}
       \G(S)=M.
\end{equation}
The proof proceeds by fixing an infinite family of factorizations and enlarging the initial bad submonoid as much as possible without destroying that family.

\subsection{The survival poset}

Let $M$ be a group, let $N\subseteq M$ be a submonoid, let $b\in N$, and fix an infinite family
\begin{equation}\label{eq:Omega-group}
      \Omega\subseteq\Zsf_{N,\ell}(b).
\end{equation}
We write $\mathscr S_\Omega(N,M)$ for the set of all $\Omega$-admissible submonoids $S$ satisfying
\begin{equation}\label{eq:S-between}
      N\subseteq S\subseteq M.
\end{equation}
The set is nonempty because $N\in\mathscr S_\Omega(N,M)$.

Define a relation $\precO$ on $\mathscr S_\Omega(N,M)$ by
\begin{equation}\label{eq:order-Omega}
      S_1\precO S_2
      \quad\Longleftrightarrow\quad
      S_1\subseteq S_2
      \quad\text{and}\quad
      \U(S_1)=S_1\cap\U(S_2).
\end{equation}
The second condition says only that no element of $S_1$ becomes a new unit after passing to $S_2$.  It is weaker than unit-reflection over $S_1$, which would require
\begin{equation}\label{eq:strong-unit-reflection-over-S1}
      \U(S_2)\cap\G(S_1)=\U(S_1).
\end{equation}
This weaker condition is sufficient for the chain-union and maximality arguments below.  The strong unit-reflection needed to keep the fixed factorizations injective is imposed separately in Definition~\ref{def:Omega-admissible}, namely through the condition
\begin{equation}\label{eq:strong-unit-reflection-over-N-reminder}
      \U(S)\cap\G(N)=\U(N).
\end{equation}

\begin{lemma}\label{lem:survival-poset}
The relation $\precO$ makes $\mathscr S_\Omega(N,M)$ into a partially ordered set.
\end{lemma}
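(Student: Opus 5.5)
We check reflexivity, antisymmetry and transitivity of $\precO$ directly from the definition \eqref{eq:order-Omega}. The inclusion part $S_1\subseteq S_2$ is itself a partial order, so reflexivity and antisymmetry are immediate. For reflexivity, the second condition reads $\U(S)=S\cap\U(S)$, which holds trivially. For antisymmetry, $S_1\precO S_2$ and $S_2\precO S_1$ give $S_1\subseteq S_2\subseteq S_1$, hence $S_1=S_2$. Membership in $\mathscr S_\Omega(N,M)$ plays no role in any of these checks, since the relation is just being restricted to that set.

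The only step with content is transitivity. Suppose $S_1\precO S_2$ and $S_2\precO S_3$. Then $S_1\subseteq S_3$ holds, and we must show
\begin{equation*}
      \U(S_1)=S_1\cap\U(S_3).
\end{equation*}

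The inclusion $\U(S_1)\subseteq S_1\cap\U(S_3)$ holds because a unit of $S_1$ has its inverse in $S_1\subseteq S_3$.

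For the reverse inclusion, let $x\in S_1\cap\U(S_3)$. Since $x\in S_1\subseteq S_2$, we have $x\in S_2\cap\U(S_3)$. The hypothesis $S_2\precO S_3$ gives $S_2\cap\U(S_3)=\U(S_2)$, so $x\in\U(S_2)$. Hence $x\in S_1\cap\U(S_2)$, and the hypothesis $S_1\precO S_2$ gives $S_1\cap\U(S_2)=\U(S_1)$, so $x\in\U(S_1)$.

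I expect no real obstacle here. The one point to watch is that transitivity really uses the ``$S_1\cap$'' form of the condition. The chain of intersections works because $S_1\cap\U(S_3)=S_1\cap\bigl(S_2\cap\U(S_3)\bigr)$, which holds since $S_1\subseteq S_2$.
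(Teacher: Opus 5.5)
Your proposal is correct and follows essentially the same argument as the paper: reflexivity and antisymmetry come from inclusion, and transitivity from $S_1\cap\U(S_3)=S_1\cap\bigl(S_2\cap\U(S_3)\bigr)=S_1\cap\U(S_2)=\U(S_1)$. You split that last identity into two inclusions, whereas the paper chains the equalities directly; this is only a presentational difference.
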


\begin{proof}
Reflexivity is immediate.  If $S_1\precO S_2$ and $S_2\precO S_1$, then $S_1=S_2$ from the inclusions.  For transitivity, suppose
\begin{equation}\label{eq:transitivity-assumption}
      S_1\precO S_2\precO S_3.
\end{equation}
Then $S_1\subseteq S_3$.  Also,
\begin{align}
      S_1\cap\U(S_3)
      &=S_1\cap(S_2\cap\U(S_3)) &&\text{because }S_1\subseteq S_2
          \label{eq:transitivity-1}\\
      &=S_1\cap\U(S_2) &&\text{because }S_2\precO S_3
          \label{eq:transitivity-2}\\
      &=\U(S_1) &&\text{because }S_1\precO S_2.
          \label{eq:transitivity-3}
\end{align}
Therefore $S_1\precO S_3$.
\end{proof}

\begin{lemma}[chain unions]\label{lem:chain-union}
Let $(S_i)_{i\in\Lambda}$ be a nonempty chain in $(\mathscr S_\Omega(N,M),\precO)$.  Put
\begin{equation}\label{eq:chain-union}
      S=\bigcup_{i\in\Lambda}S_i.
\end{equation}
Then $S\in\mathscr S_\Omega(N,M)$ and $S_i\precO S$ for every $i\in\Lambda$.
\end{lemma}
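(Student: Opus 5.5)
The argument is a sequence of direct verifications for the union $S=\bigcup_{i}S_i$ of a nonempty chain: that $S$ is a submonoid lying between $N$ and $M$, that it satisfies the two admissibility conditions of Definition~\ref{def:Omega-admissible}, and that $S_i\precO S$ for each $i$. The one structural fact used throughout is that every element of $S$, and every finite relation among elements of $S$, already lives in a single $S_j$, since the chain is totally ordered by inclusion.

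\emph{Submonoid.} Given $x,y\in S$, choose $j$ with $x,y\in S_j$; then $x+y\in S_j\subseteq S$. Also $N\subseteq S\subseteq M$ holds trivially.

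\emph{The key units lemma.} I would first show
\begin{equation*}
      \U(S)\cap S_i=\U(S_i)\qquad\text{for every } i\in\Lambda .
\end{equation*}
This is exactly the relation $S_i\precO S$. The inclusion $\supseteq$ is immediate. For $\subseteq$, take $x\in S_i\cap\U(S)$, so $-x\in S$ and hence $-x\in S_j$ for some $j$. If $S_j\subseteq S_i$, then $x\in\U(S_i)$ directly. Otherwise $S_i\precO S_j$ by comparability, and $x\in S_i\cap\U(S_j)=\U(S_i)$.

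\emph{Unit reflection over $N$.} Let $g\in\U(S)\cap\G(N)$. Then $g$ and $-g$ both lie in $S$, so both lie in some single $S_j$. Thus $g\in\U(S_j)\cap\G(N)=\U(N)$ by admissibility of $S_j$. The reverse inclusion holds because $\U(N)\subseteq\U(S)$.

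\emph{Atom preservation.} This is the step I expect to need the most care, because both non-units and indecomposability have to be transported from the union back to a single member of the chain. Fix $\alpha\in\mathcal C_N(\Omega)$ and write $a=a_\alpha$.
\begin{itemize}
\item $a$ is not a unit of $S$. Otherwise $a\in\U(S)\cap\G(N)=\U(N)$, contradicting that $\alpha$ is an atom of $N_{\mathrm{red}}$.
\item $a+\U(S)$ is indecomposable. Suppose $a=x+y+\eta$ with $x,y\in S$ and $\eta\in\U(S)$. Choose one index $j$ such that $S_j$ contains $x$, $y$, $\eta$ and $-\eta$; this is possible because finitely many elements of a chain union lie in a common member. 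Then $\eta\in\U(S_j)$, and admissibility of $S_j$ says $a+\U(S_j)$ is an atom of $(S_j)_{\mathrm{red}}$. Hence $x\in\U(S_j)$ or $y\in\U(S_j)$.
\item Since $\U(S_j)\subseteq\U(S)$, this gives $x\in\U(S)$ or $y\in\U(S)$.
\end{itemize}
So $a+\U(S)\in\A(S_{\mathrm{red}})$. Together with unit reflection this shows $S\in\mathscr S_\Omega(N,M)$, and the units lemma gives $S_i\precO S$ for all $i\in\Lambda$.

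Note that this step never needs the units lemma. That lemma is used only for the order relation $S_i\precO S$, while admissibility follows purely from finiteness of the relevant witnesses.
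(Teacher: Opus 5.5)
Your proposal is correct and follows the paper's proof essentially step for step. It uses the same identity $\U(S_i)=S_i\cap\U(S)$ via comparability in the chain, and it reduces both unit reflection over $N$ and indecomposability to a single member $S_j$ containing the finitely many witnesses. The differences are cosmetic:
\begin{itemize}
\item You rule out $a\in\U(S)$ using the already-established unit reflection over $N$. The paper instead uses that $a+\U(S_j)$ is an atom of $(S_j)_{\mathrm{red}}$.
\item You correctly note that the last step needs only the trivial inclusion $\U(S_j)\subseteq\U(S)$, whereas the paper cites the full units identity there.
\end{itemize}
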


\begin{proof}
Since the $S_i$ form a chain by inclusion and each $S_i$ is a submonoid, $S$ is a submonoid of $M$ containing $N$.

Fix $i\in\Lambda$.  We first prove
\begin{equation}\label{eq:unit-chain}
      \U(S_i)=S_i\cap\U(S).
\end{equation}
The inclusion $\subseteq$ is clear.  Conversely, let $u\in S_i\cap\U(S)$.  Then $-u\in S$, so $-u\in S_j$ for some $j\in\Lambda$.  Because the family is a chain, either $S_i\subseteq S_j$ or $S_j\subseteq S_i$.  If $S_i\subseteq S_j$, then
\begin{equation}\label{eq:u-unit-if-Si-sub-Sj}
      u\in S_i\cap\U(S_j)=\U(S_i)
\end{equation}
by $S_i\precO S_j$.  If $S_j\subseteq S_i$, then $u,-u\in S_i$, so $u\in\U(S_i)$.  This proves \eqref{eq:unit-chain}.

Now $S$ is unit-reflecting over $N$.  Indeed, if
\begin{equation}\label{eq:u-in-US-GN}
      u\in\U(S)\cap\G(N),
\end{equation}
then $u\in S_i$ and $-u\in S_j$ for suitable $i,j$.  Choose $k$ with $S_i\cup S_j\subseteq S_k$; then $u\in\U(S_k)\cap\G(N)=\U(N)$.  Thus
\begin{equation}\label{eq:S-unit-reflecting}
      \U(S)\cap\G(N)=\U(N).
\end{equation}

It remains to verify atom preservation.  Fix $\alpha\in\mathcal C_N(\Omega)$ and write $a=a_\alpha$.  First $a\notin\U(S)$: otherwise $-a\in S_j$ for some $j\in\Lambda$, while $a\in N\subseteq S_j$, so $a\in\U(S_j)$, contradicting that $a+\U(S_j)$ is an atom.  Now suppose
\begin{equation}\label{eq:a-decomposes-union}
      a=x+y+\eps,
      \qquad x,y\in S,
      \quad \eps\in\U(S).
\end{equation}
Choose $i,j,k\in\Lambda$ such that $x\in S_i$, $y\in S_j$, and $\eps,-\eps\in S_k$.  Since the set is a chain, there exists $h\in\Lambda$ with
\begin{equation}\label{eq:h-contains}
      S_i\cup S_j\cup S_k\subseteq S_h.
\end{equation}
Then \eqref{eq:a-decomposes-union} is a decomposition of $a+\U(S_h)$ in $(S_h)_{\mathrm{red}}$.  Since $S_h$ is $\Omega$-admissible, $a+\U(S_h)$ is an atom.  Hence
\begin{equation}\label{eq:x-or-y-unit-Sh}
      x\in\U(S_h)
      \quad\text{or}\quad
      y\in\U(S_h).
\end{equation}
By \eqref{eq:unit-chain}, $\U(S_h)=S_h\cap\U(S)$, so $x\in\U(S)$ or $y\in\U(S)$.  Therefore $a+\U(S)$ is an atom of $S_{\mathrm{red}}$.  Thus $S\in\mathscr S_\Omega(N,M)$, and \eqref{eq:unit-chain} gives $S_i\precO S$ for all $i$.
\end{proof}

\begin{proposition}[maximal survival]\label{prop:maximal-survival}
The poset $(\mathscr S_\Omega(N,M),\precO)$ has a maximal element.
\end{proposition}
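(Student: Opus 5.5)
The plan is to apply Zorn's lemma to the poset $(\mathscr S_\Omega(N,M),\precO)$. Lemma~\ref{lem:survival-poset} already tells us that $\precO$ is a partial order. The set $\mathscr S_\Omega(N,M)$ is nonempty, since it contains $N$: indeed $\U(N)\cap\G(N)=\U(N)$ holds trivially, and each $a_\alpha+\U(N)=\alpha$ is an atom of $N_{\mathrm{red}}$ by definition of $\mathcal C_N(\Omega)$. So it remains to check that every chain has an upper bound in $\mathscr S_\Omega(N,M)$.

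For the empty chain, $N$ itself serves as an upper bound. For a nonempty chain $(S_i)_{i\in\Lambda}$, we take its union $S=\bigcup_i S_i$. Lemma~\ref{lem:chain-union} gives exactly what is needed: $S\in\mathscr S_\Omega(N,M)$, and $S_i\precO S$ for every $i$. The second condition in \eqref{eq:order-Omega}, namely $\U(S_i)=S_i\cap\U(S)$, is part of that lemma's conclusion, so $S$ is an upper bound with respect to $\precO$ and not merely with respect to inclusion. Zorn's lemma then yields a maximal element.

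One point deserves care. The order $\precO$ is not plain inclusion, so a chain for $\precO$ is also a chain under inclusion. That is precisely the hypothesis Lemma~\ref{lem:chain-union} uses when it picks a common $S_h$ containing finitely many members of the chain. Since that lemma has already done the substantive work, nothing here looks like a real obstacle. The only steps requiring attention are the separate handling of the empty chain and the check that $N$ is admissible.
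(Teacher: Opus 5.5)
Your proposal is correct and matches the paper's proof: Zorn's lemma applied with $N$ bounding the empty chain and Lemma~\ref{lem:chain-union} supplying a $\precO$-upper bound for every nonempty chain. One small wording fix: a $\precO$-chain is a chain under inclusion because $S_1\precO S_2$ implies $S_1\subseteq S_2$, not because $\precO$ differs from plain inclusion.
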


\begin{proof}
Every nonempty chain has an upper bound by Lemma~\ref{lem:chain-union}, and the empty chain has an upper bound because $\mathscr S_\Omega(N,M)$ is nonempty.  Zorn's lemma applies.
\end{proof}

\subsection{The perturbation lemma}

Let $S\in\mathscr S_\Omega(N,M)$ and $u\in M$.  Put
\begin{equation}\label{eq:w-def}
      w=2b+u,
      \qquad
      S'=S+\Nzero w=\bigcup_{n\ge 0}(S+nw).
\end{equation}
The next lemma is the technical heart of the proof.  It shows that, under the standard alternative used in the Gotti--Li maximality method, the enlargement $S\subseteq S'$ does not alter units and does not split any atom used by $\Omega$.

\begin{lemma}[no new units]\label{lem:no-new-units}
Let $M$ be a group, let $S\subseteq M$ be a submonoid, and let $b\in S\setminus\U(S)$.  Let $u\in M$ satisfy at least one of the following conditions:
\begin{align}
&\exists n_0\in\N\text{ such that }n_0u\in S, \label{eq:condition-i}\\
&nu+q\ne0\quad\text{for all }n\in\N\text{ and all }q\in S. \label{eq:condition-ii}
\end{align}
Set $w=2b+u$ and $S'=S+\Nzero w$.  Then
\begin{equation}\label{eq:units-Sprime-S}
      \U(S')=\U(S).
\end{equation}
In particular, $w\notin\U(S')$ unless $w\in\U(S)$, and the latter cannot occur under \eqref{eq:condition-i} or \eqref{eq:condition-ii}.
\end{lemma}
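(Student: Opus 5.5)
The plan is to show directly that any unit of $S'$ already lies in $S$ together with its inverse. The inclusion $\U(S)\subseteq\U(S')$ is trivial since $S\subseteq S'$, so only $\U(S')\subseteq\U(S)$ needs work. Let $x\in\U(S')$. Then both $x$ and $-x$ lie in $S'=S+\Nzero w$, so we can write
\begin{equation*}
      x=s+nw,\qquad -x=s'+mw,\qquad s,s'\in S,\ n,m\in\Nzero.
\end{equation*}
Adding these gives $0=q+kw$ with $q=s+s'\in S$ and $k=n+m$. The key step is to show that $k=0$. In that case $n=m=0$, so $x=s\in S$ and $-x=s'\in S$, which gives $x\in\U(S)$.

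Assume instead that $k\ge1$. Substituting $w=2b+u$ turns the relation into
\begin{equation*}
      0=(q+2kb)+ku,\qquad q+2kb\in S.
\end{equation*}
This is split according to which hypothesis holds.
\begin{itemize}
\item Under \eqref{eq:condition-ii}, the relation is exactly an equation $ku+q''=0$ with $k\in\N$ and $q''\in S$. This is forbidden, so we have an immediate contradiction.
\item Under \eqref{eq:condition-i}, we multiply the relation by $n_0$ to get
\begin{equation*}
      0=b+\bigl((2kn_0-1)b+n_0q+k(n_0u)\bigr).
\end{equation*}
The bracket lies in $S$, because $n_0u\in S$, $q\in S$, $b\in S$ and $2kn_0-1\ge1$. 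So $b$ has an additive inverse in $S$, i.e.\ $b\in\U(S)$. This contradicts the hypothesis $b\in S\setminus\U(S)$.
\end{itemize}
In both cases $k=0$, which proves \eqref{eq:units-Sprime-S}.

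For the final assertion, note that $w\in S'$. Hence $w\in\U(S')$ is equivalent, by \eqref{eq:units-Sprime-S}, to $w\in\U(S)$. It therefore suffices to rule out $w,-w\in S$, and this is the same computation again with $q=-w$ and $k=1$.
\begin{itemize}
\item Under \eqref{eq:condition-ii}: $-u=2b+(-w)\in S$, so $u+(-u)=0$ is a forbidden relation with $n=1$ and $q=-u\in S$.
\item Under \eqref{eq:condition-i}: $0=n_0w+n_0(-w)=2n_0b+n_0u+n_0(-w)$. Every term other than one copy of $b$ lies in $S$, so again $b\in\U(S)$, a contradiction.
\end{itemize}

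I do not expect a serious obstacle. The only delicate point is handling \eqref{eq:condition-i}: the multiple $k$ of $u$ must be made to land in $S$ by scaling by $n_0$, while at least one copy of $b$ is kept visible. The factor $2b$ in $w$ guarantees that, after scaling, the coefficient of $b$ is at least $2$, so one copy can be isolated. This is also where the hypothesis $b\notin\U(S)$ is used.
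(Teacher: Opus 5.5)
Your proof is correct and follows the paper's argument essentially step for step. You write a unit as $s+nw$ with inverse $s'+mw$, reduce to a relation $q+kw=0$, and rule out $k\ge 1$ directly under \eqref{eq:condition-ii} and by scaling by $n_0$ to make $b$ a unit under \eqref{eq:condition-i}. The final assertion is then handled by the same computation with $q=-w$ and $k=1$.

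As a minor aside, for this lemma alone $w=b+u$ would already suffice, since the leftover coefficient $kn_0-1$ is $\ge 0$. The factor $2$ is genuinely needed only in Lemma~\ref{lem:atoms-do-not-split}, where one copy of $b$ is consumed by the relation $b=a+r+\eps$.
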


\begin{proof}
It suffices to show that an element of the form $s+kw$ with $s\in S$ and $k\in\Nzero$ is invertible in $S'$ only when $k=0$ and $s\in\U(S)$.

Assume that
\begin{equation}\label{eq:invertible-skw}
      s+kw\in\U(S').
\end{equation}
Then there exist $t\in S$ and $m\in\Nzero$ such that
\begin{equation}\label{eq:inverse-skw}
      s+kw+t+mw=0.
\end{equation}
Put $K=k+m$.  If $K=0$, then $k=m=0$ and $s+t=0$, so $s\in\U(S)$.

Suppose $K\ge1$.  Since $w=2b+u$, equation \eqref{eq:inverse-skw} becomes
\begin{equation}\label{eq:K-u-equation}
      Ku+(s+t+2Kb)=0.
\end{equation}
If \eqref{eq:condition-ii} holds, this contradicts \eqref{eq:condition-ii} with $n=K$ and $q=s+t+2Kb\in S$.

Assume instead that \eqref{eq:condition-i} holds, so $n_0u\in S$ for some $n_0\in\N$.  Multiplying \eqref{eq:K-u-equation} by $n_0$ gives
\begin{equation}\label{eq:n0-multiple}
      K(n_0u)+n_0s+n_0t+2Kn_0b=0.
\end{equation}
Thus
\begin{equation}\label{eq:b-unit-contradiction}
      b+\big(K(n_0u)+n_0s+n_0t+(2Kn_0-1)b\big)=0.
\end{equation}
The second summand lies in $S$, because $K\ge1$ and $2Kn_0-1\ge1$.  Hence $b\in\U(S)$, contradicting the hypothesis.  Therefore $K=0$ is the only possible case, proving \eqref{eq:units-Sprime-S}.

It remains only to justify the final assertion.  If $w\in\U(S')$, then \eqref{eq:units-Sprime-S} gives $w\in\U(S)$.  This cannot happen under \eqref{eq:condition-ii}: indeed, $-w\in S$ and then, with $q=2b-w\in S$, one has $u+q=0$, contradicting \eqref{eq:condition-ii}.  It cannot happen under \eqref{eq:condition-i} either.  If $n_0u\in S$ and $w\in\U(S)$, then $-n_0w\in S$ and
\begin{equation}\label{eq:w-unit-forces-b-unit}
      b+\big(n_0u-n_0w+(2n_0-1)b\big)=0,
\end{equation}
where the second summand belongs to $S$ because $n_0\in\N$.  This makes $b$ a unit of $S$, again contradicting the hypothesis.
\end{proof}

\begin{lemma}[atoms used by $\Omega$ do not split]\label{lem:atoms-do-not-split}
Let $M$ be a group, let $N\subseteq S\subseteq M$, and suppose that $S$ is $\Omega$-admissible over $N$ for some infinite set
\begin{equation}\label{eq:Omega-split}
      \Omega\subseteq\Zsf_{N,\ell}(b).
\end{equation}
Assume $b\notin\U(S)$.  Let $u\in M$ satisfy \eqref{eq:condition-i} or \eqref{eq:condition-ii}, and set
\begin{equation}\label{eq:Sprime-split}
      w=2b+u,
      \qquad
      S'=S+\Nzero w.
\end{equation}
Then $S'$ is $\Omega$-admissible over $N$.
\end{lemma}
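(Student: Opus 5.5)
The plan is to verify the two conditions of Definition~\ref{def:Omega-admissible} for $S'$ directly, using Lemma~\ref{lem:no-new-units} for the unit condition and a computation modelled on its proof for the atom condition. Since $N\subseteq S\subseteq S'$ and $b\in N\subseteq S$ with $b\notin\U(S)$, Lemma~\ref{lem:no-new-units} applies and gives $\U(S')=\U(S)$. Hence
\[
\U(S')\cap\G(N)=\U(S)\cap\G(N)=\U(N),
\]
because $S$ is $\Omega$-admissible. This settles \eqref{eq:admissible-unit} for $S'$.

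For \eqref{eq:admissible-atoms}, fix $\alpha\in\mathcal C_N(\Omega)$ and write $a=a_\alpha$. First, $a\notin\U(S')$: we have $\U(S')=\U(S)$, and $a+\U(S)$ is an atom of $S_{\mathrm{red}}$. Now suppose $a=x+y+\eta$ with $x,y\in S'$ and $\eta\in\U(S')=\U(S)$. Write $x=s_1+k_1w$ and $y=s_2+k_2w$ with $s_i\in S$ and $k_i\in\Nzero$, and put $K=k_1+k_2$.

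If $K=0$, then $x,y\in S$, and this is a decomposition of $a+\U(S)$ in $S_{\mathrm{red}}$. Since that class is an atom, $x$ or $y$ lies in $\U(S)=\U(S')$, as required.

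The main step is to rule out $K\ge1$. Here I would use the divisibility relation \eqref{eq:a-alpha-divides-b}, $b=a+r+\eps$ with $r\in N\subseteq S$ and $\eps\in\U(N)\subseteq\U(S)$. Substituting the decomposition of $a$ and $w=2b+u$, and cancelling one copy of $b$ in the group $M$, gives
\[
Ku+\bigl((2K-1)b+s_1+s_2+r+\eta+\eps\bigr)=0 .
\]
Because $K\ge1$, the bracketed element $q$ lies in $S$.

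Under \eqref{eq:condition-ii}, the displayed equation contradicts that condition with $n=K$.

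Under \eqref{eq:condition-i}, multiply the equation by $n_0$. This gives
\[
b+\bigl(K(n_0u)+(n_0(2K-1)-1)b+n_0(s_1+s_2+r+\eta+\eps)\bigr)=0 .
\]
Here $n_0u\in S$ and $n_0(2K-1)-1\ge0$, so the bracket lies in $S$. Hence $b\in\U(S)$, contradicting the hypothesis. So $K\ge1$ is impossible in both cases.

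Therefore $a+\U(S')$ is an atom of $S'_{\mathrm{red}}$, and $S'$ is $\Omega$-admissible over $N$. I expect the only delicate point to be the $K\ge1$ computation: one must use the relation $b=a+r+\eps$ to turn a splitting of $a$ into an equation of the exact shape handled in Lemma~\ref{lem:no-new-units}. The coefficient $2$ in $w=2b+u$ is what guarantees that at least one copy of $b$ survives after cancellation.
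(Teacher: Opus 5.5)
Your proposal is correct and follows essentially the same route as the paper. You get $\U(S')=\U(S)$ from Lemma~\ref{lem:no-new-units}, then use $b=a+r+\eps$ to turn a putative splitting of $a$ with $K\ge1$ copies of $w$ into $Ku+q=0$ with $q\in S$, which \eqref{eq:condition-ii} rules out directly and \eqref{eq:condition-i} rules out after multiplying by $n_0$ and isolating a copy of $b$; the only difference is that you treat $K=0$ first, while the paper first forces $K=0$ and then invokes atomicity of $a+\U(S)$.
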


\begin{proof}
By Lemma~\ref{lem:no-new-units},
\begin{equation}\label{eq:units-split}
      \U(S')=\U(S).
\end{equation}
Since $S$ is $\Omega$-admissible over $N$, we get
\begin{equation}\label{eq:unit-reflection-Sprime}
      \U(S')\cap\G(N)=\U(S)\cap\G(N)=\U(N).
\end{equation}
It remains to prove atom preservation.

Fix $\alpha\in\mathcal C_N(\Omega)$ and put $a=a_\alpha$.  Since $\alpha$ occurs in a factorization of $b$, choose $r=r_\alpha\in N$ and $\eps=\eps_\alpha\in\U(N)$ with
\begin{equation}\label{eq:b-a-r-eps-split}
      b=a+r+\eps.
\end{equation}
Because $S$ is $\Omega$-admissible, $a+\U(S)$ is an atom of $S_{\mathrm{red}}$.  In particular, $a\notin\U(S)=\U(S')$.

Suppose that $a+\U(S')$ decomposes in $S'_{\mathrm{red}}$.  Then there are $x,y\in S'$ and $\eta\in\U(S')$ such that
\begin{equation}\label{eq:a-x-y-eta}
      a=x+y+\eta.
\end{equation}
Write
\begin{equation}\label{eq:x-y-skw}
      x=s_1+k_1w,
      \qquad
      y=s_2+k_2w,
      \qquad
      s_1,s_2\in S,
      \quad k_1,k_2\in\Nzero,
\end{equation}
and put $K=k_1+k_2$.  Combining \eqref{eq:b-a-r-eps-split}, \eqref{eq:a-x-y-eta}, and \eqref{eq:x-y-skw}, and using $w=2b+u$, yields
\begin{align}
      b
      &=a+r+\eps \notag\\
      &=s_1+s_2+Kw+\eta+r+\eps \notag\\
      &=s_1+s_2+r+\eta+\eps+2Kb+Ku.
      \label{eq:b-expanded-split}
\end{align}
Rearranging in the ambient group gives
\begin{equation}\label{eq:Ku-plus-q}
      Ku+\big(s_1+s_2+r+\eta+\eps+(2K-1)b\big)=0.
\end{equation}
If $K\ge1$ and \eqref{eq:condition-ii} holds, this contradicts \eqref{eq:condition-ii}, because
\begin{equation}\label{eq:q-in-S}
      s_1+s_2+r+\eta+\eps+(2K-1)b\in S.
\end{equation}
If $K\ge1$ and \eqref{eq:condition-i} holds, choose $n_0\in\N$ with $n_0u\in S$ and multiply \eqref{eq:Ku-plus-q} by $n_0$.  We get
\begin{equation}\label{eq:b-unit-split}
      b+\big(Kn_0u+n_0s_1+n_0s_2+n_0r+n_0\eta+n_0\eps+\big(n_0(2K-1)-1\big)b\big)=0.
\end{equation}
The second summand lies in $S$, because $K\ge1$ and $n_0(2K-1)-1\ge0$.  This makes $b$ a unit of $S$, contradiction.  Thus
\begin{equation}\label{eq:K-zero}
      K=0.
\end{equation}
Since $k_1,k_2\in\Nzero$, equation \eqref{eq:K-zero} gives $k_1=k_2=0$, and hence $x=s_1,y=s_2\in S$.  Also $\eta\in\U(S')=\U(S)$.  Therefore \eqref{eq:a-x-y-eta} is a decomposition of the atom $a+\U(S)$ in $S_{\mathrm{red}}$, so either
\begin{equation}\label{eq:x-or-y-unit-S}
      x\in\U(S)=\U(S')
      \qquad\text{or}\qquad
      y\in\U(S)=\U(S').
\end{equation}
This proves that $a+\U(S')$ is an atom of $S'_{\mathrm{red}}$.
\end{proof}

\begin{proposition}[maximality forces $2b+u$]\label{prop:maximal-forces}
Let $M$ be a group, let $N\subseteq M$, let $b\in N$, and let
\begin{equation}\label{eq:Omega-prop-max}
      \Omega\subseteq\Zsf_{N,\ell}(b)
\end{equation}
be infinite.  Let $S$ be a maximal element of $\mathscr S_\Omega(N,M)$ under $\precO$.  If $u\in M$ satisfies \eqref{eq:condition-i} or \eqref{eq:condition-ii}, then
\begin{equation}\label{eq:2b-u-in-S}
      2b+u\in S.
\end{equation}
\end{proposition}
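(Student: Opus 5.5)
The plan is to compare $S$ with the perturbed monoid $S'=S+\Nzero w$, where $w=2b+u$, and use maximality. I first check the hypotheses of Lemma~\ref{lem:atoms-do-not-split}. Since $S$ is $\Omega$-admissible over $N$, we have $b\notin\U(S)$. Indeed, $\Omega$ is nonempty, so $b$ has a factorization $b=a_{\alpha_1}+\cdots+a_{\alpha_\ell}+\eps$ with $\ell\ge1$, because $b\notin\U(N)$ by Lemma~\ref{lem:bfm-lffm-ffm}, or directly since $\Omega$ is infinite and hence $\ell\ge 1$. If $b$ were a unit of $S$, then $a_{\alpha_1}$ would divide a unit in $S$ and hence be a unit of $S$. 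This contradicts admissibility condition \eqref{eq:admissible-atoms}. Thus Lemma~\ref{lem:atoms-do-not-split} applies and shows that $S'\in\mathscr S_\Omega(N,M)$. Clearly $S\subseteq S'$.

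Next I verify $S\precO S'$, that is, $\U(S)=S\cap\U(S')$. Lemma~\ref{lem:no-new-units} gives the stronger equality $\U(S')=\U(S)$, whose hypotheses are $b\in S\setminus\U(S)$ (just checked) and $u$ satisfying \eqref{eq:condition-i} or \eqref{eq:condition-ii}. Hence $S\cap\U(S')=S\cap\U(S)=\U(S)$, and so $S\precO S'$.

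By maximality of $S$ under $\precO$, we conclude $S'=S$. Since $w\in S'$, it follows that $w=2b+u\in S$, which is \eqref{eq:2b-u-in-S}.

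The only step needing care is confirming $b\notin\U(S)$ from admissibility. Once that is done, everything else is a direct invocation of the two preceding lemmas and the definition of $\precO$. If the atom-dividing-a-unit argument felt delicate, I would instead phrase it via \eqref{eq:a-alpha-divides-b}. If $b\in\U(S)$, then $a_\alpha+(r_\alpha+\eps_\alpha-b)=0$ with $r_\alpha+\eps_\alpha-b\in S$, so $a_\alpha\in\U(S)$. This contradicts that $a_\alpha+\U(S)$ is an atom and in particular a nonunit class.
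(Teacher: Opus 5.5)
Your proof is correct and follows the paper's argument: you form $S'=S+\Nzero(2b+u)$, obtain $S\precO S'$ from Lemma~\ref{lem:no-new-units} and $S'\in\mathscr S_\Omega(N,M)$ from Lemma~\ref{lem:atoms-do-not-split}, and conclude $S'=S$, hence $2b+u\in S$, by maximality. The only difference is cosmetic. The paper gets $b\notin\U(S)$ from the persistence injection $\Omega\hookrightarrow\Zsf_{S,\ell}(b)$, since a unit has only the empty factorization, whereas you derive it directly from the fact that an admissible atom $a_\alpha$ divides $b$ in $S$; both are valid.
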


\begin{proof}
Because $S$ is $\Omega$-admissible, Lemma~\ref{lem:persistence} gives an injection
\begin{equation}\label{eq:Omega-in-S}
      \Omega\hookrightarrow\Zsf_{S,\ell}(b).
\end{equation}
Thus $\Zsf_S(b)$ is infinite.  In particular, $b\notin\U(S)$.

Set $w=2b+u$ and $S'=S+\Nzero w$.  By Lemma~\ref{lem:no-new-units},
\begin{equation}\label{eq:S-Sprime-units}
      \U(S')=\U(S),
\end{equation}
so $S\precO S'$.  By Lemma~\ref{lem:atoms-do-not-split}, $S'$ is $\Omega$-admissible over $N$, hence
\begin{equation}\label{eq:Sprime-in-poset}
      S'\in\mathscr S_\Omega(N,M).
\end{equation}
The maximality of $S$ in the poset gives $S'=S$.  Therefore $w=2b+u\in S$.
\end{proof}

\subsection{Forcing the generated group to be all of $M$}

We now use Proposition~\ref{prop:maximal-forces} twice, with $u=v$ and $u=-v$, to prove that a maximal survival submonoid must generate the whole ambient group.

\begin{lemma}\label{lem:maximal-is-under}
Let $M$ be a group, let $N\subseteq M$, let $b\in N$, and let
\begin{equation}\label{eq:Omega-max-under}
      \Omega\subseteq\Zsf_{N,\ell}(b)
\end{equation}
be infinite.  If $S$ is a maximal element of $\mathscr S_\Omega(N,M)$, then
\begin{equation}\label{eq:G-S-M}
      \G(S)=M.
\end{equation}
Consequently $S$ is an undermonoid of $M$.
\end{lemma}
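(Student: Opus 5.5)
The plan is to show directly that every $v\in M$ lies in $\G(S)$, using Proposition~\ref{prop:maximal-forces} once or twice depending on which of the alternatives \eqref{eq:condition-i}, \eqref{eq:condition-ii} the element $v$ satisfies. Throughout I use that $b\in N\subseteq S$, so $b$ and $2b$ lie in $\G(S)$. The inclusion $\G(S)\subseteq M$ is automatic because $M$ is a group, so only $M\subseteq\G(S)$ needs proof.

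Fix $v\in M$. \emph{Case 1:} $u=v$ satisfies \eqref{eq:condition-i} or \eqref{eq:condition-ii}. Proposition~\ref{prop:maximal-forces} then gives $2b+v\in S$. Hence
\begin{equation*}
v=(2b+v)-2b\in\G(S).
\end{equation*}

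\emph{Case 2:} $v$ satisfies neither alternative. Since \eqref{eq:condition-ii} fails, there exist $n\in\N$ and $q\in S$ with $nv+q=0$. This means $n(-v)=q\in S$, so $u=-v$ satisfies \eqref{eq:condition-i} with $n_0=n$. Applying Proposition~\ref{prop:maximal-forces} to $u=-v$ gives $2b-v\in S$. Hence
\begin{equation*}
v=2b-(2b-v)\in\G(S).
\end{equation*}

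The two cases together show $M\subseteq\G(S)$, so $\G(S)=M$, and $S$ is an undermonoid of $M$ by \eqref{eq:group-undermonoid}.

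The only point that needs care is that the case split is exhaustive. The negation of \eqref{eq:condition-ii} for $v$ is literally condition \eqref{eq:condition-i} for $-v$, so whenever the dichotomy fails for $v$ it holds for $-v$. The hypotheses of Proposition~\ref{prop:maximal-forces}, namely $\Omega$ infinite and $S$ maximal, are exactly those of the lemma, so the proposition applies in both cases and I expect no real obstacle.
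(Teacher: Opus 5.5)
Your proof is correct and is essentially the paper's argument: both rest on Proposition~\ref{prop:maximal-forces} applied to $u=\pm v$, together with the observation that the failure of \eqref{eq:condition-ii} for one of $\pm v$ is exactly \eqref{eq:condition-i} for the other. The paper argues by contradiction from a chosen $v\notin\G(S)$, using that $2b\pm v\notin S$, whereas you give the equivalent direct case split.
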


\begin{proof}
Assume, to the contrary, that
\begin{equation}\label{eq:G-S-proper}
      \G(S)\subsetneq M.
\end{equation}
Choose
\begin{equation}\label{eq:v-outside-GS}
      v\in M\setminus\G(S).
\end{equation}
Then neither $2b+v$ nor $2b-v$ belongs to $S$, because $b\in S$ and either inclusion would imply
\begin{equation}\label{eq:v-in-GS-if}
      v=(2b+v)-2b\in\G(S)
      \qquad\text{or}\qquad
      v=2b-(2b-v)\in\G(S),
\end{equation}
respectively.

Apply Proposition~\ref{prop:maximal-forces} with $u=v$.  Since $2b+v\notin S$, neither \eqref{eq:condition-i} nor \eqref{eq:condition-ii} can hold.  The failure of \eqref{eq:condition-i} gives
\begin{equation}\label{eq:nv-not-in-S}
      nv\notin S\qquad\text{for every }n\in\N.
\end{equation}
Now apply Proposition~\ref{prop:maximal-forces} with $u=-v$.  Since $2b-v\notin S$, condition \eqref{eq:condition-ii} must fail for $u=-v$.  Hence there exist $n\in\N$ and $q\in S$ such that
\begin{equation}\label{eq:condition-ii-fails-minus-v}
      n(-v)+q=0.
\end{equation}
Equivalently,
\begin{equation}\label{eq:nv-in-S-contradiction}
      nv=q\in S,
\end{equation}
contradicting \eqref{eq:nv-not-in-S}.  Therefore \eqref{eq:G-S-proper} is impossible, and $\G(S)=M$.
\end{proof}

\begin{proposition}[bad submonoids produce bad undermonoids: group case]\label{prop:group-bad-under}
Let $M$ be a group.  Suppose $N\subseteq M$ is a submonoid for which there exist $b\in N$, $\ell\in\N$, and an infinite set
\begin{equation}\label{eq:bad-group}
      \Omega\subseteq\Zsf_{N,\ell}(b).
\end{equation}
Then $M$ has an undermonoid that is not an FFM.
\end{proposition}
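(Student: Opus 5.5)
The plan is to assemble the group-case machinery already developed in this section. Since $N\in\mathscr S_\Omega(N,M)$, the survival poset is nonempty. Proposition~\ref{prop:maximal-survival} (Zorn's lemma, with chain unions handled by Lemma~\ref{lem:chain-union}) then yields a maximal element $S$ of $(\mathscr S_\Omega(N,M),\precO)$. By Lemma~\ref{lem:maximal-is-under}, this maximal $S$ satisfies $\G(S)=M$, so $S$ is an undermonoid of $M$.

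It remains to check that $S$ is not an FFM. Since $S$ is $\Omega$-admissible over $N$, Lemma~\ref{lem:persistence} gives an injection $\Theta_{N,S}:\Omega\hookrightarrow\Zsf_{S,\ell}(b)$. Because $\Omega$ is infinite, $\Zsf_{S,\ell}(b)$ is infinite. Hence $\Zsf_S(b)$ is infinite and $W=S$ is the desired undermonoid that is not an FFM.

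The one point needing care is a hypothesis hidden in Lemma~\ref{lem:maximal-is-under}, which rests on Proposition~\ref{prop:maximal-forces}. That proposition uses Lemmas~\ref{lem:no-new-units} and~\ref{lem:atoms-do-not-split}, and both require $b\notin\U(S)$. This is supplied inside the proof of Proposition~\ref{prop:maximal-forces}: $\Zsf_S(b)$ is infinite there, whereas a unit has only the empty factorization. Consequently no extra hypothesis on $b$ is needed beyond the infinitude of $\Omega$, and the argument is a direct chaining of the preceding results. I expect no substantial obstacle. The only thing to verify is that $\Omega$ is fixed once, together with the lifts $a_\alpha$ from \eqref{eq:chosen-lift}, before Zorn's lemma is applied, so that the admissibility conditions mean the same thing throughout.
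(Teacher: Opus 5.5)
Your proposal is correct and matches the paper's proof. The paper likewise takes a $\precO$-maximal element $S$ of $\mathscr S_\Omega(N,M)$ via Proposition~\ref{prop:maximal-survival}, gets $\G(S)=M$ from Lemma~\ref{lem:maximal-is-under}, and uses the injection $\Omega\hookrightarrow\Zsf_{S,\ell}(b)$ of Lemma~\ref{lem:persistence}; your remark that $b\notin\U(S)$ comes from the infinitude of $\Zsf_S(b)$ inside Proposition~\ref{prop:maximal-forces} is also exactly how the paper handles it.
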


\begin{proof}
By Proposition~\ref{prop:maximal-survival}, choose a maximal element
\begin{equation}\label{eq:S-max-group}
      S\in\mathscr S_\Omega(N,M).
\end{equation}
By Lemma~\ref{lem:maximal-is-under}, $\G(S)=M$, so $S$ is an undermonoid of $M$.  Since $S$ is $\Omega$-admissible over $N$, Lemma~\ref{lem:persistence} gives an injection
\begin{equation}\label{eq:Omega-injects-S-group}
      \Omega\hookrightarrow\Zsf_{S,\ell}(b).
\end{equation}
Thus $\Zsf_S(b)$ is infinite, and $S$ is not an FFM.
\end{proof}

\section{Proof of the main theorem}

We now combine the non-group and group constructions with the hereditary bounded-factorization theorem of Gotti and Li.

\begin{theorem}\label{thm:gotti-li-5-5-positive}
Let $M$ be a cancellative commutative monoid.  If every undermonoid of $M$ is an FFM, then every submonoid of $M$ is an FFM.
\end{theorem}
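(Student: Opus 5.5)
We argue by contradiction, assembling the results already established. Assume every undermonoid of $M$ is an FFM, and suppose some submonoid $N\subseteq M$ is not an FFM.

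\emph{Step 1: reduce to a fixed length.} Since every FFM is a BFM, every undermonoid of $M$ is a BFM. The Gotti--Li theorem for bounded factorization \cite[Theorem~5.4]{GottiLi} then shows that every submonoid of $M$ is a BFM; in particular $N$ is a BFM. Because $N$ is a BFM but not an FFM, Lemma~\ref{lem:bfm-lffm-ffm} provides $b\in N\setminus\U(N)$, a length $\ell\in\N$, and an infinite set $\Omega\subseteq\Zsf_{N,\ell}(b)$. This step is the only place where the external theorem enters. It is needed because the constructions of the previous sections preserve only a fixed infinite family of factorizations of one fixed length; they do not control the set of lengths.

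\emph{Step 2: split into cases.} If $M$ is not a group, Proposition~\ref{prop:non-group-bad-under} applied to $(N,b,\ell,\Omega)$ yields an undermonoid $W=N\cup I_b(M)$ that is not an FFM. If $M$ is a group, Proposition~\ref{prop:group-bad-under} applied to the same data yields an undermonoid, namely a maximal element $S$ of $\mathscr S_\Omega(N,M)$, that is not an FFM. In both cases we contradict the hypothesis, so every submonoid of $M$ is an FFM. The converse implication (i)$\Rightarrow$(ii) of Theorem~\ref{thm:main} is trivial, since undermonoids are submonoids; together these give Theorem~\ref{thm:main}.

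\emph{Expected obstacle.} The main point needing care is checking that the hypotheses of the cited propositions match exactly. Lemma~\ref{lem:bfm-lffm-ffm} gives $\Omega$ infinite inside $\Zsf_{N,\ell}(b)$ with $b\in N$, which is precisely what both propositions require. The condition $b\notin\U(S)$, used in the perturbation lemmas, is recovered inside the group-case argument from Lemma~\ref{lem:persistence}, so no extra verification is needed there. One should also note that the Gotti--Li theorem is applied to the same ambient monoid $M$, so that its hypothesis is exactly ``every undermonoid of $M$ is a BFM.''
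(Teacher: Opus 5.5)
Your proposal is correct and follows the paper's proof essentially step for step. You pass from FFM to BFM on undermonoids and invoke the Gotti--Li bounded-factorization theorem, extract an infinite fixed-length family $\Omega\subseteq\Zsf_{N,\ell}(b)$ via Lemma~\ref{lem:bfm-lffm-ffm}, and then reach a contradiction with Proposition~\ref{prop:non-group-bad-under} or Proposition~\ref{prop:group-bad-under} according to whether $M$ is a group; your side remark that $b\notin\U(S)$ is recovered from Lemma~\ref{lem:persistence} in the group case also matches the paper.
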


\begin{proof}
Assume that every undermonoid of $M$ is an FFM.  Then every undermonoid of $M$ is a BFM.  By the Gotti--Li theorem for bounded factorizations~\cite[Theorem~5.4]{GottiLi}, every submonoid of $M$ is a BFM.

Suppose, toward a contradiction, that some submonoid $N\subseteq M$ is not an FFM.  Since $N$ is a BFM, Lemma~\ref{lem:bfm-lffm-ffm} gives an element $b\in N$, an integer $\ell\in\N$, and an infinite family
\begin{equation}\label{eq:Omega-main-proof}
       \Omega\subseteq\Zsf_{N,\ell}(b).
\end{equation}
If $M$ is not a group, Proposition~\ref{prop:non-group-bad-under} produces an undermonoid of $M$ that is not an FFM.  If $M$ is a group, Proposition~\ref{prop:group-bad-under} produces an undermonoid of $M$ that is not an FFM.  In either case the conclusion contradicts the hypothesis.  Hence no such $N$ exists, and every submonoid of $M$ is an FFM.
\end{proof}

\begin{proof}[Proof of Theorem~\ref{thm:main}]
If every submonoid of $M$ is an FFM, then every undermonoid is an FFM because every undermonoid is, in particular, a submonoid.  The converse is Theorem~\ref{thm:gotti-li-5-5-positive}.
\end{proof}

\begin{corollary}[positive answer to Question~\ref{q:gotti-li}]\label{cor:question-positive}
For every cancellative commutative monoid $M$, finite factorization is detected by undermonoids:
\begin{align}\label{eq:detect-cor}
&\big(\forall W\subseteq M\big)\,
  \big[W\text{ undermonoid}\Rightarrow W\text{ is an FFM}\big] \notag\\
&\hspace{3em}\Longrightarrow
  \big(\forall N\subseteq M\big)\,
  \big[N\text{ submonoid}\Rightarrow N\text{ is an FFM}\big].
\end{align}
Thus Gotti--Li Question~\ref{q:gotti-li} has an affirmative answer.
\end{corollary}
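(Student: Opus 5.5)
This corollary is essentially a restatement of Theorem~\ref{thm:gotti-li-5-5-positive}, so the plan is to derive it directly from that theorem rather than redo the constructions. Fix a cancellative commutative monoid $M$ and assume the antecedent of \eqref{eq:detect-cor}: every undermonoid $W$ of $M$ (that is, every submonoid with $\G(W)=\G(M)$) is an FFM. Theorem~\ref{thm:gotti-li-5-5-positive} has exactly this hypothesis and concludes that every submonoid $N$ of $M$ is an FFM, which is the consequent of \eqref{eq:detect-cor}. The final sentence follows because Question~\ref{q:gotti-li} asks precisely whether this implication holds for an arbitrary monoid $M$.

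For completeness I would recall in one sentence how the proof of Theorem~\ref{thm:gotti-li-5-5-positive} is assembled, since that is where all the content lies. First, \cite[Theorem~5.4]{GottiLi} upgrades ``all undermonoids are BFMs'' to ``all submonoids are BFMs''. Then Lemma~\ref{lem:bfm-lffm-ffm} extracts from a non-FFM submonoid $N$ a data triple $(b,\ell,\Omega)$ with $\Omega\subseteq\Zsf_{N,\ell}(b)$ infinite. Finally, Proposition~\ref{prop:non-group-bad-under} (if $M$ is not a group) or Proposition~\ref{prop:group-bad-under} (if $M$ is a group) produces a non-FFM undermonoid, which contradicts the hypothesis.

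There is no genuine obstacle at the level of the corollary; the only thing to check is that the quantifier structure in \eqref{eq:detect-cor} matches the theorem verbatim. All substantive difficulty is upstream, chiefly in the group case. There, Lemma~\ref{lem:atoms-do-not-split} and Lemma~\ref{lem:maximal-is-under} must ensure that a $\precO$-maximal $\Omega$-admissible submonoid generates all of $M$, and this corollary simply inherits that result.
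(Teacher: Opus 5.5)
Your proposal is correct and matches the paper. The corollary is just a restatement of Theorem~\ref{thm:gotti-li-5-5-positive}, and that theorem is proved exactly as you recall: first the Gotti--Li bounded-factorization theorem, then Lemma~\ref{lem:bfm-lffm-ffm}, then Proposition~\ref{prop:non-group-bad-under} or Proposition~\ref{prop:group-bad-under} depending on whether $M$ is a group.
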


\section{Strengthened formulations and consequences}

The proof gives a slightly more precise statement than Theorem~\ref{thm:main}.  The obstruction to hereditary finite factorization can be localized at a single element, a single length, and a single infinite family of factorizations.  This section records the strengthened form, since it may be useful in applications.

\begin{definition}\label{def:fixed-slice-bad}
Let $M$ be a monoid.  A triple $(N,b,\ell)$ is called a \emph{fixed-slice FFM obstruction in $M$} if $N\subseteq M$ is a submonoid, $b\in N$, $\ell\in\N$, and
\begin{equation}\label{eq:fixed-slice-obstruction}
       |\Zsf_{N,\ell}(b)|=\infty.
\end{equation}
It is called an \emph{undermonoid fixed-slice obstruction} if $N$ is an undermonoid of $M$.
\end{definition}

\begin{theorem}[local obstruction theorem]\label{thm:local-obstruction}
Let $M$ be a monoid.  If $M$ has a fixed-slice FFM obstruction $(N,b,\ell)$, then $M$ has an undermonoid fixed-slice obstruction $(W,b,\ell)$: there exists an undermonoid $W$ of $M$ such that
\begin{equation}\label{eq:local-obstruction-conclusion}
       |\Zsf_{W,\ell}(b)|=\infty.
\end{equation}
Moreover, the infinite set of factorizations in $W$ can be chosen to be the image of any prescribed infinite subset
\begin{equation}\label{eq:prescribed-Omega}
       \Omega\subseteq\Zsf_{N,\ell}(b)
\end{equation}
under the persistence map of Lemma~\ref{lem:persistence}.
\end{theorem}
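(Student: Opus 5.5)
The plan is to reuse the two constructions already established, now keeping track of the prescribed family $\Omega$ rather than only the failure of the FFM property. Fix a fixed-slice obstruction $(N,b,\ell)$ and any infinite subset $\Omega\subseteq\Zsf_{N,\ell}(b)$; such a subset exists because $|\Zsf_{N,\ell}(b)|=\infty$, and we may even take $\Omega=\Zsf_{N,\ell}(b)$. The strategy is to produce an undermonoid $W$ of $M$ that is $\Omega$-admissible over $N$ in the sense of Definition~\ref{def:Omega-admissible}. Lemma~\ref{lem:persistence} then supplies an injection $\Theta_{N,W}:\Omega\hookrightarrow\Zsf_{W,\ell}(b)$, whose image is infinite. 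This gives $|\Zsf_{W,\ell}(b)|=\infty$ and also the ``moreover'' clause. Notably, no bounded-factorization input (and hence no appeal to the Gotti--Li theorem) is needed here, since the hypothesis already provides the fixed length $\ell$.

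I would split into cases according to whether $M$ is a group. If $M$ is not a group, take $W=N\cup I_b(M)$. Lemma~\ref{lem:divisor-complement}(v) shows that $W$ is an undermonoid, and Lemma~\ref{lem:non-group-preserve-atoms} shows that $W$ is $\Omega$-admissible over $N$. The conclusion then follows from Lemma~\ref{lem:persistence}, exactly as in the proof of Proposition~\ref{prop:non-group-bad-under}.

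If $M$ is a group, Proposition~\ref{prop:maximal-survival} gives a $\precO$-maximal element $S$ of $\mathscr S_\Omega(N,M)$, and we take $W=S$. Lemma~\ref{lem:maximal-is-under} gives $\G(S)=M$. Since $S$ lies in $\mathscr S_\Omega(N,M)$, it is $\Omega$-admissible, so Lemma~\ref{lem:persistence} again applies. In this case the persistence map is $\Theta_{N,S}$, which sends each atom class $\alpha$ to $a_\alpha+\U(S)$ for the fixed lifts $a_\alpha$; this is precisely the ``image under the persistence map'' required in the statement.

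The only point requiring care is that the earlier lemmas were stated under the hypothesis that $\Omega$ is infinite. For Lemmas~\ref{lem:maximal-is-under} and~\ref{lem:atoms-do-not-split}, this hypothesis is used only through the consequence $b\notin\U(S)$. That consequence follows because $\Omega$ is nonempty, which forces $\ell\ge1$, and each atom $a_\alpha$ divides $b$ in $S$ while $a_\alpha\notin\U(S)$. So the hypotheses are satisfied for any infinite prescribed $\Omega$. I expect no genuine obstacle: the substantive work, namely the atom preservation in the group case under the perturbation $S\mapsto S+\Nzero(2b+u)$, has already been done, and the proof amounts to checking that the same $\Omega$ is carried through both constructions.
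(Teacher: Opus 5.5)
Your proposal is correct and follows the paper's proof essentially verbatim. In the non-group case you take $W=N\cup I_b(M)$ and apply Lemmas~\ref{lem:divisor-complement}, \ref{lem:non-group-preserve-atoms} and \ref{lem:persistence}; in the group case you take a $\precO$-maximal element of $\mathscr S_\Omega(N,M)$ and apply Lemmas~\ref{lem:maximal-is-under} and \ref{lem:persistence}. Your closing caveat is unnecessary, since the prescribed $\Omega$ is infinite and $\ell\in\N$ by definition, so those lemmas apply as stated (and it is $\ell\in\N$, not nonemptiness of $\Omega$, that rules out $\ell=0$).
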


\begin{proof}
If $M$ is not a group, take
\begin{equation}\label{eq:W-local-non-group}
       W=N\cup I_b(M).
\end{equation}
The assertion follows from Lemmas~\ref{lem:divisor-complement}, \ref{lem:non-group-preserve-atoms}, and \ref{lem:persistence}.  If $M$ is a group, take a maximal element $W$ of $\mathscr S_\Omega(N,M)$.  Then Lemma~\ref{lem:maximal-is-under} and Lemma~\ref{lem:persistence} give the assertion.
\end{proof}

\begin{corollary}\label{cor:LFFM-version}
For a monoid $M$, the following conditions are equivalent.
\begin{enumerate}
\item Every submonoid of $M$ is an LFFM.
\item Every undermonoid of $M$ is an LFFM.
\end{enumerate}
\end{corollary}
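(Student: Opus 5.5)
The implication (i)$\Rightarrow$(ii) is immediate, since every undermonoid is a submonoid. For the converse, I will argue by contraposition and reduce to the local obstruction theorem (Theorem~\ref{thm:local-obstruction}). The key point is that, unlike the FFM case, no appeal to the Gotti--Li bounded factorization theorem is needed to isolate a fixed length. The LFFM property already consists of atomicity plus finiteness of every fixed-length fiber $\Zsf_{N,\ell}(b)$. So a submonoid $N$ can fail to be an LFFM in only two ways: $N$ is not atomic, or some fiber $\Zsf_{N,\ell}(b)$ is infinite.

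In the second case, $(N,b,\ell)$ is a fixed-slice FFM obstruction in the sense of Definition~\ref{def:fixed-slice-bad}. Theorem~\ref{thm:local-obstruction} then supplies an undermonoid $W$ with $|\Zsf_{W,\ell}(b)|=\infty$, so $W$ is not an LFFM, contradicting (ii). Note that the hypothesis $b\notin\U$ used in the group-case lemmas comes automatically from $\Omega$ being infinite, as in the proof of Proposition~\ref{prop:maximal-forces}.

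The main obstacle is the first case, where $N$ fails to be atomic. Neither construction of the paper was designed to transport non-atomicity, so here I would lean on known results about atomicity rather than reprove things. I would first try to invoke the analogue of \cite[Theorem~5.4]{GottiLi} for atomicity, if one is available. Failing that, I would use the fact that (ii) applied to $W=M$ (which is trivially an undermonoid) together with the undermonoids produced below constrains $M$ strongly.

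Concretely, I would attempt a direct transfer. Take a non-atomic element $b\in N$ and enlarge $N$ to the same $W$ as before: in the non-group case $W=N\cup I_b(M)$, and in the group case a maximal element of a poset of extensions. The aim is to show that $b$ still has no factorization in $W$. The argument mirrors Lemma~\ref{lem:non-group-preserve-atoms}: any factorization of $b$ in $W$ consists of divisors of $b$. Hence in the non-group case it lies in $N$, and since $\U(W)=\U(N)$, the atoms of $W$ dividing $b$ are exactly the atoms of $N$ dividing $b$. A factorization of $b$ in $W$ would therefore already be one in $N$.

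For the group case, I would replace $\Omega$-admissibility by the condition ``$\U(S)\cap\G(N)=\U(N)$ and $b$ has no factorization in $S$''. I would then check three things. First, chain unions preserve this condition, by the same finiteness argument as in Lemma~\ref{lem:chain-union}, since a factorization involves finitely many elements. Second, the perturbation $S'=S+\Nzero(2b+u)$ preserves it. As in Lemma~\ref{lem:atoms-do-not-split}, any decomposition of a divisor of $b$ in $S'$ must have $K=0$, so divisors of $b$ in $S'$ are divisors in $S$ and the atoms dividing $b$ coincide. Third, Lemma~\ref{lem:maximal-is-under} then goes through unchanged. I expect this last verification, that atoms of $S'$ dividing $b$ are atoms of $S$ and conversely, to be the delicate step.
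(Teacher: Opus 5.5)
Your proposal is correct. The infinite-fiber case is exactly the paper's argument via Theorem~\ref{thm:local-obstruction}; the only divergence is the non-atomic case.

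The paper never transfers non-atomicity. It cites the Gotti--Li detection theorem for atomicity, \cite[Theorem~3.3]{GottiLi}, to conclude that every submonoid is atomic, so failure of the LFFM property can only come from an infinite fiber. The result you hoped ``is available'' does exist, and your first option is literally the paper's proof.

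Your concrete fallback is a genuinely different, self-contained route, and it works.
\begin{itemize}
\item In the non-group case, every factor of $b$ in $W=N\cup I_b(M)$ divides $b$ in $M$, hence lies in $N$. Since $\U(W)=\U(N)$, each such factor is an atom of $N$, so any factorization of $b$ in $W$ is already one in $N$.
\item In the group case, the step you flagged as delicate is routine. A non-atomic $b$ is not a unit, because units have the empty factorization, so Lemma~\ref{lem:no-new-units} gives $\U(S')=\U(S)$. The $K=0$ computation from Lemma~\ref{lem:atoms-do-not-split} then puts every factor of a factorization of $b$ in $S'$ inside $S$. Each such factor is an atom of $S$, since decompositions in $S$ are decompositions in $S'$ and $\U(S')=\U(S)$.
\item For chain unions, the relation $\U(S_h)=S_h\cap\U(S)$ from Lemma~\ref{lem:chain-union} makes atoms of $S$ that lie in $S_h$ into atoms of $S_h$.
\end{itemize}
Throughout, only this ``downward'' direction of atom persistence is needed. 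The unit-reflection condition over $N$ is superfluous here, since no injectivity of a persistence map is required. It is harmless to keep it.

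The paper's route is two lines but rests on an external theorem. Yours reproves a localized form of Gotti--Li's atomicity detection: the same element $b$ remains non-atomic in an undermonoid, in parallel with Theorem~\ref{thm:local-obstruction}. Your remark about applying (ii) to $W=M$ plays no role and can be dropped.
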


\begin{proof}
The implication (i)$\Rightarrow$(ii) is trivial.  Conversely, suppose every undermonoid of $M$ is an LFFM.  Since every LFFM is atomic, every undermonoid of $M$ is atomic; by the Gotti--Li detection theorem for atomicity~\cite[Theorem~3.3]{GottiLi}, every submonoid of $M$ is atomic.  If some submonoid $N\subseteq M$ were not an LFFM, atomicity of $N$ would force the existence of $b\in N$ and $\ell\in\N$ with $|\Zsf_{N,\ell}(b)|=\infty$.  By Theorem~\ref{thm:local-obstruction}, there would be an undermonoid $W$ with $|\Zsf_{W,\ell}(b)|=\infty$, contradicting that $W$ is an LFFM.
\end{proof}

\begin{remark}\label{rem:relation-to-FFM-BFM}
Corollary~\ref{cor:LFFM-version} is the fixed-length component of Theorem~\ref{thm:main}.  Since Gotti and Li already proved the corresponding hereditary detection theorem for BFMs, one also obtains Theorem~\ref{thm:main} by combining Corollary~\ref{cor:LFFM-version} with
\begin{equation}\label{eq:FFM-BFM-LFFM-again}
       \mathrm{FFM}=\mathrm{BFM}\cap\mathrm{LFFM}.
\end{equation}
The proof above gives the stronger conclusion that the same prescribed infinite fixed-length family survives in an undermonoid.
\end{remark}

\begin{corollary}[minimal fixed-slice counterexample form]\label{cor:minimal-counterexample}
Let $M$ be a monoid.  Suppose every proper undermonoid of $M$ is an FFM.  If $M$ contains a fixed-slice FFM obstruction $(N,b,\ell)$, then $M$ itself is not an FFM and every undermonoid obstruction constructed in Theorem~\ref{thm:local-obstruction} equals $M$.
\end{corollary}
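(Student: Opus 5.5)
The plan is to apply Theorem~\ref{thm:local-obstruction} directly and then use the hypothesis on proper undermonoids to pin down where the obstruction lives.

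First, given the fixed-slice obstruction $(N,b,\ell)$, choose an infinite set $\Omega\subseteq\Zsf_{N,\ell}(b)$. Theorem~\ref{thm:local-obstruction} produces an undermonoid $W$ of $M$ with $|\Zsf_{W,\ell}(b)|=\infty$. In the non-group case $W=N\cup I_b(M)$; in the group case $W$ is a maximal element of $\mathscr S_\Omega(N,M)$. Either way, $W$ is an undermonoid that is not an FFM, since $\Zsf_W(b)\supseteq\Zsf_{W,\ell}(b)$ is infinite.

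Second, I compare $W$ with $M$. Every proper undermonoid of $M$ is an FFM by hypothesis, and $W$ is an undermonoid that is not an FFM. So $W$ cannot be proper, which forces $W=M$. This applies to every such construction, whether it comes from a choice of $\Omega$ or from a choice of maximal element. Since $W=M$, the conclusion $|\Zsf_{W,\ell}(b)|=\infty$ reads $|\Zsf_{M,\ell}(b)|=\infty$, and so $M$ is not an FFM.

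The argument has no real obstacle; the only point needing care is the meaning of "proper undermonoid." I read it as an undermonoid $W\subsetneq M$, which is the reading under which the dichotomy above is exhaustive. It is also worth noting that $M$ counts as an undermonoid of itself, since $\G(M)=\G(M)$, so the case $W=M$ is consistent with $W$ being an undermonoid.
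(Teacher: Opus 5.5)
Your proposal is correct and is essentially the paper's proof: Theorem~\ref{thm:local-obstruction} gives an undermonoid $W$ with $|\Zsf_{W,\ell}(b)|=\infty$, so $W$ is not an FFM and cannot be proper, which forces $W=M$ and shows $M$ is not an FFM. Your remark that the conclusion holds for every choice of $\Omega$ and every choice of maximal element also makes explicit what ``every undermonoid obstruction constructed'' means.
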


\begin{proof}
Theorem~\ref{thm:local-obstruction} supplies an undermonoid $W$ of $M$ such that $|\Zsf_{W,\ell}(b)|=\infty$.  Hence $W$ is not an FFM.  Since every proper undermonoid is an FFM, $W$ cannot be proper.  Therefore $W=M$, and $M$ is not an FFM.
\end{proof}

\section{Examples and sharpness of the hypotheses}

The proof of Theorem~\ref{thm:main} is not a finite-generation argument, and none of its enlargement steps preserves finite generation in general.  The examples below clarify what is and is not being used.

\begin{example}[a fixed-length obstruction]\label{ex:Q-obstruction}
Let
\begin{equation}\label{eq:Q-monoid}
      H=\{0\}\cup\Q_{\ge1}
\end{equation}
under ordinary addition.  The atoms of $H$ are exactly the rational numbers in $[1,2)$.  The element $3$ has infinitely many length-two factorizations
\begin{equation}\label{eq:Q-factorizations}
      3=\bigg(\frac32-\frac1n\bigg)+
        \bigg(\frac32+\frac1n\bigg)
      \qquad(n\ge3).
\end{equation}
Thus
\begin{equation}\label{eq:Z-H-2-infinite}
      |\Zsf_{H,2}(3)|=\infty.
\end{equation}
If $H$ is embedded as a submonoid of a larger monoid $M$, Theorem~\ref{thm:local-obstruction} says that the same fixed length $2$ can be witnessed inside an undermonoid of $M$.  In the non-group case this undermonoid is explicitly
\begin{equation}\label{eq:explicit-undermonoid-H}
      H\cup\{m\in M:m\nmid_M 3\}.
\end{equation}
\end{example}

\begin{example}[why unit reflection is needed]\label{ex:unit-reflection-needed}
Let $G$ be an abelian group and let $S\subseteq G$ be a submonoid containing $a$ and $a+u$ but not containing $u$ or $-u$.  It can happen that $a$ and $a+u$ represent distinct classes in $S/\U(S)$, while after enlarging $S$ by adjoining both $u$ and $-u$ they become associated:
\begin{equation}\label{eq:unit-collapse}
      (a+u)-a=u\in\U(S+\Z u).
\end{equation}
Thus the equality
\begin{equation}\label{eq:weak-unit-condition}
      \U(T)\cap S=\U(S)
\end{equation}
for an extension $S\subseteq T$ is not sufficient to guarantee injectivity of $S/\U(S)\to T/\U(T)$.  What is needed is
\begin{equation}\label{eq:strong-unit-condition-example}
      \U(T)\cap\G(S)=\U(S).
\end{equation}
This is the reason for Definition~\ref{def:unit-reflecting} and for the unit-reflection condition in the survival poset.
\end{example}

\begin{example}[the perturbation coefficient $2b+u$]\label{ex:why-2b-u}
The factor $2b$ in the perturbation
\begin{equation}\label{eq:perturbation-example}
      S'=S+\Nzero(2b+u)
\end{equation}
serves two purposes.  First, if a unit relation involving $2b+u$ occurs, then after multiplying by an integer $n_0$ with $n_0u\in S$ one obtains an equation
\begin{equation}\label{eq:unit-relation-example}
      b+q=0,
      \qquad q\in S,
\end{equation}
which contradicts $b\notin\U(S)$.  Second, if an atom $a$ dividing $b$ decomposes after the perturbation and $K\ge1$ copies of $2b+u$ occur in the decomposition, then the relation
\begin{equation}\label{eq:atom-split-example}
      Ku+q=0,
      \qquad q\in S,
\end{equation}
arises with $q$ containing $(2K-1)b$.  The coefficient $2K-1$ is positive; after multiplying by an integer $n_0$ with $n_0u\in S$ and isolating one copy of $b$, the remaining coefficient is $n_0(2K-1)-1\ge0$.  This nonnegativity is the arithmetic reason for using $2b+u$ rather than $b+u$.
\end{example}

\begin{remark}[no finite-generation assertion]\label{rem:no-finite-generation}
The theorem is hereditary over arbitrary submonoids and undermonoids.  Even when $N$ is finitely generated, the non-group enlargement
\begin{equation}\label{eq:I-b-not-fg}
      N\cup I_b(M)
\end{equation}
need not be finitely generated.  Likewise, in the group case the maximal element supplied by Zorn's lemma need not admit a finite presentation.  Thus Theorem~\ref{thm:main} is not a statement about affine semigroups or finitely generated monoids; it is a structural statement about how fixed-length factorization infinitude propagates through the Grothendieck group.
\end{remark}

\section{Concluding statement}

Combining the bounded-factorization detection theorem with the fixed-length propagation theorem proved here gives the exact hereditary criterion
\begin{equation}\label{eq:final-equivalence}
\boxed{
       \begin{array}{c}
       \text{every submonoid of }M\text{ is an FFM}
       \\[2mm]
       \Longleftrightarrow
       \\[2mm]
       \text{every undermonoid of }M\text{ is an FFM}.
       \end{array}}
\end{equation}
Equivalently, if finite factorization fails anywhere inside $M$, then it already fails in a submonoid with full Grothendieck group.  More precisely, every infinite fixed-length fiber
\begin{equation}\label{eq:final-fixed-fiber}
       \Omega\subseteq\Zsf_{N,\ell}(b)
\end{equation}
inside an arbitrary submonoid $N$ can be transported injectively to a fiber
\begin{equation}\label{eq:final-injected-fiber}
       \Theta(\Omega)\subseteq\Zsf_{W,\ell}(b)
\end{equation}
inside an undermonoid $W$ of $M$.  In the non-group case the transport is explicit via the divisor-complement ideal $I_b(M)$; in the group case it is obtained by maximal survival under perturbations $S\mapsto S+\Nzero(2b+u)$.  This completes the affirmative solution of Question~\ref{q:gotti-li}.

\section{Additional bookkeeping for the survival construction}

The proof above deliberately fixes a set of lifts \eqref{eq:chosen-lift}.  This section records two invariance facts which are often suppressed in shorter accounts but are useful for checking that the argument is independent of all auxiliary choices.  Throughout this section, \(
N\subseteq S\subseteq M\) are submonoids, \(\Omega\subseteq \Zsf_{N,\ell}(b)\), and \({\cal C}={\cal C}_N(\Omega)\).

\begin{lemma}[independence of the chosen lifts]\label{lem:lifts-independent}
Assume that \(
S\) is unit-reflecting over \(N\), and let \(a_\alpha\in N\) and \(a'_\alpha\in N\) be two lift systems for the same classes \(\alpha\in{\cal C}\), so
\begin{equation}\label{eq:two-lift-systems}
       a'_\alpha-a_\alpha\in \U(N)
       \qquad(\alpha\in{\cal C}).
\end{equation}
Then, for every \(\alpha\in{\cal C}\),
\begin{equation}\label{eq:same-class-in-S}
       a'_\alpha+\U(S)=a_\alpha+\U(S).
\end{equation}
Consequently the following two conditions are equivalent:
\begin{equation}\label{eq:lift-equivalence}
       a_\alpha+\U(S)\in \A(S_{\mathrm{red}})
       \quad(\alpha\in{\cal C})
       \quad\Longleftrightarrow\quad
       a'_\alpha+\U(S)\in \A(S_{\mathrm{red}})
       \quad(\alpha\in{\cal C}).
\end{equation}
\end{lemma}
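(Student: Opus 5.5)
The argument is short and uses only that units of $N$ remain units of $S$. Fix $\alpha\in{\cal C}$. By \eqref{eq:two-lift-systems} there is $\eps\in\U(N)$ with $a'_\alpha=a_\alpha+\eps$. Since $N\subseteq S$, both $\eps$ and $-\eps$ lie in $S$, so $\eps\in\U(S)$; that is, $\U(N)\subseteq\U(S)$. Hence $a'_\alpha-a_\alpha\in\U(S)$, which is exactly \eqref{eq:same-class-in-S}: the two lifts give the same element of $S_{\mathrm{red}}$. The unit-reflection hypothesis is not needed for this step. It only guarantees the converse, namely that distinct classes $\alpha\ne\beta$ stay distinct in $S_{\mathrm{red}}$, as in Lemma~\ref{lem:persistence}. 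I would remark on this rather than use it.

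For the equivalence \eqref{eq:lift-equivalence}, I would observe that membership of a coset in $\A(S_{\mathrm{red}})$ depends only on the coset, not on the representative. The left- and right-hand conditions ask whether the same elements of $S_{\mathrm{red}}$ are atoms, so they are literally identical conditions, indexed by the same set ${\cal C}$.

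If one wants to be explicit at the level of representatives, use the characterization \eqref{eq:atom-class}. Suppose $a'_\alpha=x+y+\eta$ with $x,y\in S$ and $\eta\in\U(S)$. Then
\[
a_\alpha=x+y+(\eta-\eps),
\qquad \eta-\eps\in\U(S),
\]
so any decomposition of one lift transfers to the other with the same $x,y$. Likewise $a'_\alpha\in\U(S)$ if and only if $a_\alpha\in\U(S)$.

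There is no real obstacle here. The only point needing care is to note that $\U(N)\subseteq\U(S)$ holds for any inclusion of submonoids, so the unit-reflecting assumption plays no role in this particular lemma.
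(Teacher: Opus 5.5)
Your proof is correct and matches the paper's argument: the paper also derives \eqref{eq:same-class-in-S} from $\U(N)\subseteq\U(S)$, then notes that atomicity depends only on the class in $S/\U(S)$, so \eqref{eq:lift-equivalence} is immediate. You are also right that the unit-reflecting hypothesis is never used, and the paper's proof does not invoke it either.
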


\begin{proof}
Since \(\U(N)\subseteq \U(S)\), equation \eqref{eq:two-lift-systems} implies \eqref{eq:same-class-in-S}.  Atomicity is a property of the class in the reduced monoid \(S/\U(S)\), not of the representative.  Hence \eqref{eq:lift-equivalence} follows immediately.
\end{proof}

\begin{lemma}[multiplicity-vector form of persistence]\label{lem:multiplicity-vector}
Let \(S\) be \(\Omega\)-admissible over \(N\).  For \(z\in\Omega\), write
\begin{equation}\label{eq:multiplicity-vector-z}
       z=\sum_{\alpha\in{\cal C}} m_z(\alpha)\alpha,
       \qquad m_z(\alpha)\in\Nzero,
       \qquad \sum_{\alpha\in{\cal C}}m_z(\alpha)=\ell.
\end{equation}
Then the persistence map of Lemma~\ref{lem:persistence} is given by
\begin{equation}\label{eq:Theta-vector}
       \Theta_{N,S}(z)=
       \sum_{\alpha\in{\cal C}} m_z(\alpha)\big(a_\alpha+\U(S)\big).
\end{equation}
Moreover,
\begin{equation}\label{eq:Theta-vector-injective}
       \Theta_{N,S}(z)=\Theta_{N,S}(z')
       \quad\Longleftrightarrow\quad
       m_z(\alpha)=m_{z'}(\alpha)\quad(\alpha\in{\cal C}),
\end{equation}
and therefore \(\Theta_{N,S}\) is injective on every subset of \(\Omega\).
\end{lemma}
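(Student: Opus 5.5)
The formula \eqref{eq:Theta-vector} comes straight from the definition \eqref{eq:Theta-def}. For $z\in\Omega$, list the atom classes of $z$ with multiplicity, $z=\alpha_1+\cdots+\alpha_\ell$. Group the indices according to the value of $\alpha_i$: exactly $m_z(\alpha)$ of them equal $\alpha$. Since $\Zsf(S)$ is commutative, the sum $\sum_i(a_{\alpha_i}+\U(S))$ regroups as $\sum_{\alpha\in\mathcal C}m_z(\alpha)(a_\alpha+\U(S))$.

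For \eqref{eq:Theta-vector-injective}, the direction ``$\Leftarrow$'' is immediate from \eqref{eq:Theta-vector}. For ``$\Rightarrow$'', I first show that the map
\[
\alpha\longmapsto a_\alpha+\U(S)
\]
from $\mathcal C$ into $\A(S_{\mathrm{red}})$ is injective; its values are atoms by \eqref{eq:admissible-atoms}. Suppose $a_\alpha+\U(S)=a_\beta+\U(S)$. Then $a_\alpha-a_\beta$ lies in $\U(S)\cap\G(N)$, which equals $\U(N)$ by \eqref{eq:admissible-unit}, so $\alpha=\beta$. This is the same step as in the proof of Lemma~\ref{lem:persistence}, and equivalently it is the injectivity of $\rho_{N,S}$ from Lemma~\ref{lem:unit-reflecting}.

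Now use that $\Zsf(S)=\F(\A(S_{\mathrm{red}}))$ is free on the atom classes, so each element has a unique coefficient on each basis atom. The basis elements $a_\alpha+\U(S)$ for distinct $\alpha\in\mathcal C$ are pairwise distinct. Hence the coefficient of $a_\alpha+\U(S)$ in $\Theta_{N,S}(z)$ is exactly $m_z(\alpha)$, and $\Theta_{N,S}(z)=\Theta_{N,S}(z')$ forces $m_z(\alpha)=m_{z'}(\alpha)$ for every $\alpha$.

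Finally, $z$ is determined by its multiplicity vector, because $\Zsf(N)$ is free. So $\Theta_{N,S}$ is injective on $\Omega$, and therefore on every subset of $\Omega$.

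The only delicate step is the distinctness of the basis elements $a_\alpha+\U(S)$. Weak unit reflection, $\U(S)\cap N=\U(N)$, would not be enough here, as Example~\ref{ex:unit-reflection-needed} shows; the argument genuinely needs the stronger condition \eqref{eq:admissible-unit}, stated with $\G(N)$.
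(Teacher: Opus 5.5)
Your proposal is correct and follows essentially the same route as the paper: you regroup \eqref{eq:Theta-def} by atom class, use $\U(S)\cap\G(N)=\U(N)$ to show that distinct $\alpha\in\mathcal C$ give distinct atoms $a_\alpha+\U(S)$, and then read off the coefficients in the free monoid $\Zsf(S)$. You also spell out two points the paper leaves implicit: the trivial ``$\Leftarrow$'' direction, and the recovery of $z$ from its multiplicity vector via freeness of $\Zsf(N)$.
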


\begin{proof}
Formula \eqref{eq:Theta-vector} is just \eqref{eq:Theta-def} grouped by the atom classes of \(N_{\mathrm{red}}\).  If \(\Theta_{N,S}(z)=\Theta_{N,S}(z')\), then equality in the free monoid \(\Zsf(S)\) gives equality of the corresponding atom-class multisets in \(S_{\mathrm{red}}\).  Unit-reflection gives
\begin{equation}\label{eq:unit-reflection-vector-proof}
       a_\alpha+\U(S)=a_\beta+\U(S)
       \Longrightarrow
       a_\alpha-a_\beta\in \U(S)\cap \G(N)=\U(N)
       \Longrightarrow
       \alpha=\beta.
\end{equation}
Thus no two distinct indices in \({\cal C}\) are identified in \(S_{\mathrm{red}}\), and the coefficients in \eqref{eq:Theta-vector} are uniquely recoverable.  This proves \eqref{eq:Theta-vector-injective}.
\end{proof}

\begin{corollary}[subfamilies survive simultaneously]\label{cor:subfamilies}
Let \(S\) be \(\Omega\)-admissible over \(N\).  If \(\Omega'\subseteq\Omega\), then \(S\) is \(\Omega'\)-admissible over \(N\), and
\begin{equation}\label{eq:subfamily-injection}
       \Theta_{N,S}|_{\Omega'}:\Omega'\hookrightarrow \Zsf_{S,\ell}(b)
\end{equation}
is injective.  In particular, every infinite subfamily of the original obstruction remains an infinite subfamily after the enlargement.
\end{corollary}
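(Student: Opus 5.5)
The statement to prove is Corollary~\ref{cor:subfamilies}. I would unwind Definition~\ref{def:Omega-admissible} for $\Omega'$ and compare it directly with the same definition for $\Omega$.

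\textbf{Unit condition.} Admissibility over $N$ has two parts. The unit condition $\U(S)\cap\G(N)=\U(N)$ does not mention the family at all, so it transfers from $\Omega$ to $\Omega'$ unchanged.

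\textbf{Atom condition.} By \eqref{eq:C-Omega}, if $z'\in\Omega'$ then $z'\in\Omega$, so every class with $z'(\alpha)>0$ lies in $\mathcal C_N(\Omega)$. Hence $\mathcal C_N(\Omega')\subseteq\mathcal C_N(\Omega)$.

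The atom condition also depends on the chosen lifts. For $\alpha\in\mathcal C_N(\Omega')$, a lift chosen for $\Omega'$ may differ from the lift $a_\alpha$ chosen for $\Omega$. Any two such lifts differ by an element of $\U(N)$. Lemma~\ref{lem:lifts-independent} applies because $S$ is unit-reflecting over $N$. It shows that the class $a_\alpha+\U(S)$, and hence its atomicity in $S_{\mathrm{red}}$, does not depend on the lift. So $a_\alpha+\U(S)$ is an atom for every $\alpha\in\mathcal C_N(\Omega')$, which is exactly the atom condition for $\Omega'$. Thus $S$ is $\Omega'$-admissible over $N$.

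\textbf{Injectivity.} Lemma~\ref{lem:persistence}, applied with $\Omega'$ in place of $\Omega$, gives an injection $\Omega'\hookrightarrow\Zsf_{S,\ell}(b)$. With compatible lifts this map is the restriction $\Theta_{N,S}|_{\Omega'}$. Alternatively, the restriction of the injective map $\Theta_{N,S}$ to any subset is injective, which is also the last clause of Lemma~\ref{lem:multiplicity-vector}. Finally, if $\Omega'$ is infinite, its injective image in $\Zsf_{S,\ell}(b)$ is infinite, giving the last sentence of the corollary.

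The only point needing care is the lift bookkeeping in the atom condition, and Lemma~\ref{lem:lifts-independent} handles it directly.
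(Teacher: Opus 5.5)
Your proposal is correct and follows the paper's own argument. The unit condition does not depend on the family, the inclusion $\mathcal C_N(\Omega')\subseteq\mathcal C_N(\Omega)$ weakens the atom requirement, and injectivity passes to the restriction (Lemma~\ref{lem:multiplicity-vector}); your extra remark on lift-independence via Lemma~\ref{lem:lifts-independent} is a harmless refinement that the paper leaves implicit.
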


\begin{proof}
The inclusion \({\cal C}_N(\Omega')\subseteq{\cal C}_N(\Omega)\) makes the atom-preservation requirement weaker.  The unit-reflection condition is unchanged, and injectivity follows from Lemma~\ref{lem:multiplicity-vector}.
\end{proof}

\begin{lemma}[compatibility with intermediate unit-reflecting extensions]\label{lem:intermediate}
Suppose
\begin{equation}\label{eq:intermediate-chain}
       N\subseteq S\subseteq T\subseteq M,
       \qquad
       \U(S)\cap\G(N)=\U(N),
       \qquad
       \U(T)\cap\G(S)=\U(S).
\end{equation}
If \(S\) is \(\Omega\)-admissible over \(N\) and every class
\begin{equation}\label{eq:intermediate-atom-condition}
       a_\alpha+\U(S)
       \qquad(\alpha\in{\cal C})
\end{equation}
remains an atom after the inclusion \(S\subseteq T\), then \(T\) is \(\Omega\)-admissible over \(N\).
\end{lemma}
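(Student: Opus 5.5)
We must check the two conditions of Definition~\ref{def:Omega-admissible} for \(T\): unit reflection over \(N\), namely \(\U(T)\cap\G(N)=\U(N)\), and atom preservation, namely \(a_\alpha+\U(T)\in\A(T_{\mathrm{red}})\) for all \(\alpha\in{\cal C}\). The second condition is essentially the hypothesis \eqref{eq:intermediate-atom-condition}. We read ``remains an atom after the inclusion \(S\subseteq T\)'' as saying that the image \(\rho_{S,T}(a_\alpha+\U(S))=a_\alpha+\U(T)\) lies in \(\A(T_{\mathrm{red}})\). Since \(\rho_{S,T}\) is just \(s+\U(S)\mapsto s+\U(T)\), this image is the class of the same lift \(a_\alpha\). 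By Lemma~\ref{lem:lifts-independent} applied to \(T\), the conclusion does not depend on the chosen lift system. So atom preservation costs nothing, provided \(T\) is unit-reflecting over \(N\), which is what Lemma~\ref{lem:lifts-independent} assumes.

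The substantive step is therefore the transitivity of unit reflection. Take \(u\in\U(T)\cap\G(N)\). Since \(N\subseteq S\), we have \(\G(N)\subseteq\G(S)\), so \(u\in\U(T)\cap\G(S)=\U(S)\) by the second hypothesis in \eqref{eq:intermediate-chain}. Then \(u\in\U(S)\cap\G(N)=\U(N)\) by the first hypothesis. The reverse inclusion \(\U(N)\subseteq\U(T)\cap\G(N)\) holds because \(N\subseteq T\). This gives \(\U(T)\cap\G(N)=\U(N)\). Equivalently, by Lemma~\ref{lem:unit-reflecting}, \(\rho_{N,T}=\rho_{S,T}\circ\rho_{N,S}\) is a composite of injective maps and hence injective.

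Combining the two steps, \(T\) satisfies \eqref{eq:admissible-unit} and \eqref{eq:admissible-atoms}, so it is \(\Omega\)-admissible over \(N\). By Lemma~\ref{lem:persistence}, the persistence maps then compose: \(\Theta_{N,T}=\Theta_{S,T}\circ\Theta_{N,S}\) on \(\Omega\).

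There is no real obstacle here. The only point needing care is the interpretive one in the first paragraph: the atom hypothesis must be read at the level of classes in \(T_{\mathrm{red}}\), and one must note that the strong unit reflection \(\U(T)\cap\G(S)=\U(S)\) is exactly what makes the composite map injective. The weaker condition \(\U(S)=S\cap\U(T)\) defining \(\precO\) would not suffice, as Example~\ref{ex:unit-reflection-needed} shows.
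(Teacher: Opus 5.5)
Your proof is correct and matches the paper's argument: you get unit reflection from $\U(T)\cap\G(N)\subseteq\U(T)\cap\G(S)=\U(S)$ and then $\U(S)\cap\G(N)=\U(N)$, and you read atom preservation directly off the hypothesis, as the paper does. The appeal to Lemma~\ref{lem:lifts-independent} and the identity $\Theta_{N,T}=\Theta_{S,T}\circ\Theta_{N,S}$ are harmless extras that the statement does not need (the latter requires using the lifts $a_\alpha$ for the classes in $S_{\mathrm{red}}$).
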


\begin{proof}
First,
\begin{align}
       \U(T)\cap\G(N)
       &\subseteq \U(T)\cap\G(S) \label{eq:intermediate-units-1}\\
       &=\U(S), \label{eq:intermediate-units-2}
\end{align}
because \(\G(N)\subseteq\G(S)\).  Intersecting with \(\G(N)\) again gives
\begin{equation}\label{eq:intermediate-units-final}
       \U(T)\cap\G(N)=\U(S)\cap\G(N)=\U(N).
\end{equation}
The assumed preservation of the classes \eqref{eq:intermediate-atom-condition} is exactly the second admissibility condition for \(T\).
\end{proof}

\begin{remark}[where maximality is used]\label{rem:maximality-use}
The maximal element in Proposition~\ref{prop:maximal-survival} is not used to prove that a particular perturbation preserves the fixed factorizations.  Preservation is the content of Lemmas~\ref{lem:no-new-units} and \ref{lem:atoms-do-not-split}.  Maximality is used only after preservation has been established, through the implication
\begin{equation}\label{eq:maximality-logic}
       S\precO S+\Nzero(2b+u),
       \quad
       S+\Nzero(2b+u)\in\mathscr S_\Omega(N,M)
       \quad\Longrightarrow\quad
       2b+u\in S.
\end{equation}
Thus the proof separates into the two independent mechanisms
\begin{equation}\label{eq:two-mechanisms}
       \text{survival of }\Omega
       \qquad\text{and}\qquad
       \text{maximal forcing of }2b+u.
\end{equation}
This separation is what lets a fixed-length obstruction be transported without requiring any finiteness, finite generation, or presentation hypothesis.
\end{remark}

\bmhead{Statements and Declarations}

\noindent\textbf{Funding.} No funding was received for this work.

\noindent\textbf{Competing interests.} The authors declare that they have no competing interests.

\noindent\textbf{Data availability.} No datasets were generated or analysed during the current study.

\noindent\textbf{Author contributions.}
Y.Z. conceived the main problem, developed the principal arguments, and drafted the manuscript. Y.Y. contributed to the verification and refinement of the proofs, improved the exposition, and assisted with the preparation of the final version. Both authors discussed the results, reviewed the manuscript, and approved the final version for submission.
\section*{Declaration of Generative AI and AI-Assisted Technologies in the Writing Process}
During the preparation of this work, the authors used DeepSeek to build a specialized agent for solving mathematical problems, which was employed to generate an initial proof of the main theorem. After using this tool, the authors reviewed and edited the content as needed and take full responsibility for the content of the published article.

\end{document}